\newcommand{\R}{\mathbb{R}}
\newcommand{\iy}{\infty}
\newcommand{\p}{\partial}
\newcommand{\Om}{\Omega}
\newcommand{\re}{\R}
\newcommand{\ren}{\R^N}
\newcommand{\dyle}{\displaystyle}
\newcommand{\ene}{{I\!\!N}}
\newcommand{\io}{\int\limits_\O}
\newcommand{\limit}{\lim\limits}
\newcommand{\dint}{\dyle\int}
\renewcommand{\a }{\alpha }
\renewcommand{\b }{\beta }
\renewcommand{\d }{\delta }
\newcommand{\D }{\Delta }
\newcommand{\e }{\varepsilon }
\newcommand{\g }{\gamma}
\newcommand{\G }{\Gamma }
\renewcommand{\l }{\lambda }
\newcommand{\n }{\nabla }
\newcommand{\s }{\sigma }
\renewcommand{\O }{\Omega }
\newtheorem{Theorem}{Theorem}[section]
\newtheorem{Definition}[Theorem]{Definition}
\newtheorem{Lemma}[Theorem]{Lemma}
\newtheorem{Proposition}[Theorem]{Proposition}
\newtheorem{remarks}[Theorem]{Remarks}
\newcommand{\cqd}{{\unskip\nobreak\hfil\penalty50
        \hskip2em\hbox{}\nobreak\hfil\mbox{\rule{1ex}{1ex} \qquad}
        \parfillskip=0pt \finalhyphendemerits=0\par\medskip}}
\begin{document}


\title[Nonlinear fractional elliptic problem]{Nonlinear fractional elliptic problem with singular term at the boundary}
\author[B. Abdellaoui, K. Biroud, A. Primo]{B. Abdellaoui, K. Biroud, A. Primo}

\thanks{ This work is partially supported by project  MTM2013-40846-P and MTM2016-80474-P, MINECO, Spain.} \keywords{fractional Laplacian, Hardy inequality, critical problem, singular weight, blow-up arguments, Liouville type results, the concentration-compactness principle.
\\
\indent 2000 {\it Mathematics Subject Classification:MSC 2000: 35D05, 35D10, 35J20, 35J25, 35J70.} }
\address{\hbox{\parbox{5.7in}{\medskip\noindent {B. Abdellaoui: Laboratoire d'Analyse Nonlin\'eaire et Math\'ematiques
Appliqu\'ees. \hfill \break\indent D\'epartement de
Math\'ematiques, Universit\'e Abou Bakr Belka\"{\i}d, Tlemcen,
\hfill\break\indent Tlemcen 13000, Algeria.}}}}
\address{\hbox{\parbox{5.7in}{\medskip\noindent{K. Biroud: Ecole Sup\'rieure de Management. \hfill \break\indent No. 01, Rue Barka Ahmed Bouhannak Imama,
\hfill\break\indent Tlemcen 13000, Algeria. }}}}
\address{\hbox{\parbox{5.7in}{\medskip\noindent{A. Primo: Departamento de Matem{\'a}ticas, U. Autonoma
de Madrid, \hfill\break\indent 28049 Madrid, Spain.\\[3pt]
        \em{E-mail addresses: }{\tt boumediene.abdellaoui@inv.uam.es, \tt kh$_{-}$biroud@yahoo.fr, \tt ana.primo@uam.es}.}}}}

\maketitle

\begin{abstract}
Let $\Omega\subset \mathbb{R}^N$ be a bounded regular domain, $0<s<1$ and $N>2s$.
We consider
$$
(P)\left\{
\begin{array}{rcll}
(-\D)^s u &= & \dfrac{u^{q}}{d^{2s}} &
\text{ in }\Omega , \\
u &> & 0 & \text{in }\Omega , \\
u & = & 0 & \text{in }\mathbb{R}^N \setminus\Omega ,%
\end{array}%
\right.
$$
where
$0<q\le 2^*-1$, $0<s<1$ and $d(x) = dist(x,\partial\Omega)$. {The main goal } of this paper is to analyze existence and non existence of solution to problem $(P)$ according to { the values} of $s$ and $q$.
\end{abstract}
\section{Introduction}\label{sec0}

In this paper we deal with the following problem
\begin{equation}\label{PHI}
\left\{
\begin{array}{rcll}
(-\D)^s u & = & \dfrac{u^{q}}{d^{2s}} &
\text{in }\Omega , \\
u & > & 0 & \text{in }\Omega , \\
u & = & 0 & \text{in } \ren \setminus\Omega ,%
\end{array}%
\right.
\end{equation}%
where $\O$ is a bounded regular domain in $\mathbb R^N$ ({ in a suitable sense given below}), $0<s<1$, $q\geq 0$ and $d(x) = dist(x,\partial\Omega)$. For $0<s<1$, the fractional Laplacian
$(-\Delta)^s $  is defined by
\begin{equation}\label{fraccionario}
(-\Delta)^{s}u(x):=a_{N,s}\mbox{ P.V. }\int_{\mathbb{R}^{N}}{\frac{u(x)-u(y)}{|x-y|^{N+2s}}\, dy},
\end{equation}
where
$$a_{N,s}:=2^{2s-1}\pi^{-\frac N2}\frac{\Gamma(\frac{N+2s}{2})}{|\Gamma(-s)|}$$
is the normalization constant such that the identity
$$(-\Delta)^{s}u=\mathcal{F}^{-1}(|\xi|^{2s}\mathcal{F}u),\, \xi\in\mathbb{R}^{N} $$ holds for all $u\in \mathcal{S}(\mathbb{R}^N)$
where $\mathcal{F}u$ denotes the Fourier transform of $u$ and
$\mathcal{S}(\mathbb{R}^N)$ is the Schwartz class of tempered functions.

The problem \eqref{PHI} is related to the following Hardy inequality, proved in \cite{BD}, see also \cite{FMT}  and the references therein.
More precisely, { assume that $s\in [\frac 12,1)$ and let $\Omega\subset \ren$ be a bounded domain such that the following property holds:} \\
{ $(\mathcal{R})$: there exists $x_0\in \p\O$ such that $\p\O\cap B(x_0,r)$ is ${\it \mathcal{C}^1}$, \\}
then there
exists a positive constant $C\equiv C(\O,N,s)$ such that for all $\phi\in C^{\infty}_{0}(\O)$,
\begin{equation}\label{Ter_Hardy}
\dfrac{a_{N,s}}{2}\iint_{D_\O}\dfrac{|\phi(x)-\phi(y)|^2}{|x-y|^{N+2s}}dxdy\geq C\int_{\O}\dfrac{\phi^2}{d^{2s}}dx.
\end{equation}
where $$
D_\Omega\equiv \ren \times \ren \setminus \big( \mathcal{C} \Omega \times  \mathcal{C} \Omega \big).
$$
In the case where $\O$ is a convex domain, then the constant $C$ does not depend on $\O$ and it is given by
$$
K_{N,s}= \dfrac{\Gamma^{2}(s+\frac{1}{2})}{\pi}.
$$
We refer to \cite{FMT} and the references therein for more details about the Hardy inequality.

{ In the whole paper, we will always assume that $\O$ is a $\mathcal{C}^{1,1}$ regular domain. It is clear that in this case the property $(\mathcal{R})$ holds trivially, however the $\mathcal{C}^{1,1}$ regularity is needed in order to get some precise behavior near the boundary to the auxiliary problem defined in Theorem \ref{avee}. }

\

Notice that, if $0<C(\O,N,s)<K_{N,s}$, { as it was proved in \cite{BM1} and \cite{BMS} for the local case}, it is not difficult to show that $C(\O,N,s)$ is achieved. Hence we get the existence of $\overline{u}$, a solution to the eigenvalue problem
\begin{equation}\label{EIG}
\left\{
\begin{array}{rcll}
(-\D)^s \overline{u} &= & C\dfrac{\overline{u}}{d^{2s}} &
\text{ in }\Omega , \\
\overline{u} & > & 0 & \text{in }\Omega , \\
\overline{u} & = & 0 & \text{in } \ren \setminus\Omega.
\end{array}%
\right.
\end{equation}

Assume now that $q\neq 1$, then in the local case $s=1$, problem \eqref{PHI} was considered recently in \cite{ABDM}. The authors proved a strong non existence result if $q<1$, however, for $q>1$, they proved the existence of a positive solution using suitable blow-up technics and the concentration compactness argument.
The main goal of this paper is to analyze the nonlocal case $s\in (0,1)$.

{ Notice that if the weight $d^{2s}$ is substituted by the potential weight $|x|^{-2s}$, the problem is related to the Hardy inequality proved in \cite{He}},
\begin{equation}\label{Hardylo}
\dfrac{a_{N,s}}{2}\iint_{D_\O}\dfrac{|\phi(x)-\phi(y)|^2}{|x-y|^{N+2s}}dxdy\geq \Lambda_{N,s}\,\dint_{\O} |x|^{-2s} \phi^2\,dx,\,\forall \phi\in \mathcal{C}^{\infty}_{0}(\O),
\end{equation}
where
$$
\Lambda_{N,s}= 2^{2s}\dfrac{\Gamma^2(\frac{N+2s}{4})}{\Gamma^2(\frac{N-2s}{4})}.
$$
In this case, it is not difficult to show that problem \eqref{PHI} has a non-negative solution if and only if $q<1$. We refer to \cite{AMPP} and \cite{DMPS} for more details about the related problem.

{ Let us summarize now the main results of the present paper:}

Fix $s\in (0,1)$, then if $q<1$, we are able to show the existence of solution $u$, in a suitable sense. This result makes a significative difference in comparing with the local case $s=1$, where a strong non existence result is proved. This seems to be surprising since the fractional Laplacian has less regularizing effect than the classical Laplacian.

{ Notice that a closely phenomenon occurs in the linear fractional equation as it was proved in \cite{DSV}. In that paper, the authors were able to prove that all functions are locally $s-$harmonic up to a small error. This produce more solutions in the fractional case than the local case (that disappears when letting $s\to 1$).}

{ The main result when $q<1$ is the following theorem.
\begin{Theorem}\label{intro0}
Assume that $0<s<1$, then for all $q\in (0,1)$, the problem \eqref{PHI} has a solution in a suitable sense given below, moreover $u(x)\ge C d^s(x)$ in $\O$.
\end{Theorem}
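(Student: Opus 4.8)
The plan is to obtain $u$ as a limit of solutions of regularized problems in which the singular weight $d^{-2s}$ is truncated, keeping all the relevant estimates uniform by trapping the approximate solutions between barriers comparable to $d^{s}$; the bound $u\ge Cd^{s}$ is then read off from the subsolution.

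\textbf{Barriers.} First I would fix a small $\delta_0>0$ and let $v$ solve $(-\Delta)^{s}v=\chi_{\{d>\delta_0\}}$ in $\Omega$, $v=0$ in $\ren\setminus\Omega$; since $\Omega$ is $\mathcal C^{1,1}$, Theorem \ref{avee} yields $c_1 d^{s}\le v\le C_1 d^{s}$. Next, set $\beta:=s(2-q)$: because $0<q<1$ one has $s<\beta<2s$, so, again by Theorem \ref{avee}, the solution $\zeta$ of $(-\Delta)^{s}\zeta=d^{-\beta}$ in $\Omega$, $\zeta=0$ outside, satisfies $c_2 d^{s}\le\zeta\le C_2 d^{s}$. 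For $n\ge1$ put $g_n(x):=\min\{d(x)^{-2s},n\}$, so that $g_n\le d^{-2s}$ everywhere and $g_n\ge m_0>0$ on $\{d\ge\delta_0\}$, uniformly in $n$. I would then check that, for $\varepsilon>0$ small and independent of $n$, $\underline u:=\varepsilon v$ is a subsolution of $(-\Delta)^{s}w=g_n(x)w^{q}$ for every $n$: on $\{d<\delta_0\}$ one has $(-\Delta)^{s}\underline u=0\le g_n\underline u^{\,q}$, while on $\{d\ge\delta_0\}$ one has $(-\Delta)^{s}\underline u=\varepsilon\le m_0 c_1^{q}\delta_0^{\,sq}\varepsilon^{q}\le g_n\underline u^{\,q}$ as soon as $\varepsilon^{1-q}\le m_0 c_1^{q}\delta_0^{\,sq}$. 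Similarly, for $A$ with $A^{1-q}\ge C_2^{\,q}$, the function $\overline u:=A\zeta$ is a supersolution for every $n$, since
$$g_n\,\overline u^{\,q}\ \le\ d^{-2s}\,(AC_2\,d^{s})^{q}\ =\ (AC_2)^{q}\,d^{\,sq-2s}\ =\ (AC_2)^{q}\,d^{-\beta}\ \le\ A\,d^{-\beta}\ =\ (-\Delta)^{s}\overline u .$$
Shrinking $\varepsilon$ once more I can also arrange $\underline u\le\overline u$ in $\Omega$.

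\textbf{Regularized problems and the limit.} For each $n$ I would solve
$$(-\Delta)^{s}u_n=g_n(x)\,u_n^{\,q}\ \text{ in }\Omega,\qquad u_n=0\ \text{ in }\ren\setminus\Omega ,$$
obtaining a positive solution with $\underline u\le u_n\le\overline u$: since $g_n$ is bounded, the map $w\mapsto(-\Delta)^{-s}(g_n w^{\,q})$ is compact on the order interval $[\underline u,\overline u]$ and, by the linear comparison principle and the sub/super\-solution property just established, maps it into itself, so Schauder's fixed point theorem applies; $u_n$ is then bounded and Hölder continuous, and
$$\varepsilon c_1\,d^{s}\ \le\ u_n\ \le\ AC_2\,d^{s}\ \le\ M:=AC_2\,(\operatorname{diam}\Omega)^{s}\qquad\text{in }\Omega,\quad\text{uniformly in }n.$$
On any $K$ with $K\subset\subset\Omega$ we have $0\le g_n u_n^{\,q}\le M^{q}\sup_{K}d^{-2s}<\infty$, so by interior regularity for $(-\Delta)^{s}$ the family $\{u_n\}$ is precompact in $C(K)$; along a subsequence, $u_n\to u$ locally uniformly in $\Omega$, with $\varepsilon c_1 d^{s}\le u\le AC_2 d^{s}$ and $u\equiv0$ outside $\Omega$. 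Since $g_n\uparrow d^{-2s}$ pointwise, $g_n u_n^{\,q}\to d^{-2s}u^{\,q}$ in $\Omega$, and $0\le g_n u_n^{\,q}\le M^{q}d^{-2s}$ with $\int_{\Omega}M^{q}d^{-2s}\,d^{s}\,dx=M^{q}\int_{\Omega}d^{-s}\,dx<\infty$ (finite precisely because $0<s<1$). Testing the regularized equation against any $\phi$ admissible in the weak formulation (so $(-\Delta)^{s}\phi\in L^{\infty}(\Omega)$ and $|\phi|\le Cd^{s}$) gives $\int_{\Omega}u_n(-\Delta)^{s}\phi\,dx=\int_{\Omega}g_n u_n^{\,q}\phi\,dx$, and passing to the limit by dominated convergence (on the left $0\le u_n\le M$; on the right $|g_n u_n^{\,q}\phi|\le CM^{q}d^{-s}\in L^{1}(\Omega)$) yields $\int_{\Omega}u(-\Delta)^{s}\phi\,dx=\int_{\Omega}\frac{u^{\,q}}{d^{2s}}\phi\,dx$ for every such $\phi$. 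This identifies $u$ as a solution of \eqref{PHI} in the required sense, with $u\ge\varepsilon c_1 d^{s}$.

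\textbf{Expected main obstacle.} The crux is the construction of $n$-uniform barriers with the sharp $d^{s}$ boundary behaviour. The supersolution works \emph{precisely} because $q>0$ forces $\beta=s(2-q)<2s$, the threshold up to which (by Theorem \ref{avee}) $(-\Delta)^{-s}$ regularizes a boundary-singular datum $d^{-\beta}$ only to $d^{s}$; this is the nonlocal effect absent in the local case $s=1$, where a datum $d^{-\gamma}$ with $\gamma=2-q\in(1,2)$ is lifted only to $d^{\,2-\gamma}\ll d$ and the analogous estimate fails — which is exactly why \eqref{PHI} is solvable for all $q\in(0,1)$ in the present setting. The subsolution, dually, must have right-hand side vanishing near $\partial\Omega$ (so that $(-\Delta)^{s}\underline u$ is absorbed by the truncated, hence near-the-boundary bounded, right-hand side $g_n\underline u^{\,q}$), which is why a naive multiple of the $s$-torsion function $\phi_1$ (with $(-\Delta)^{s}\phi_1=1$) does not work as a subsolution here.
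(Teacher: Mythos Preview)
There is a genuine error in your supersolution. You claim that for $\beta=s(2-q)\in(s,2s)$ Theorem~\ref{avee} gives $\zeta\simeq d^{s}$, but case C) of that theorem (which is the one that applies, since $\beta>s$) gives $\zeta\simeq d^{2s-\beta}=d^{sq}$. Because $sq<s$, near the boundary $d^{sq}\gg d^{s}$ and the bound $\zeta\le C_2 d^{s}$ is false. Redoing your computation with the correct upper bound $\zeta\le C_2 d^{sq}$ yields $g_n\,\overline u^{\,q}\le(AC_2)^{q}d^{sq^{2}-2s}$ while $(-\Delta)^s\overline u=Ad^{sq-2s}$; their ratio is $A^{q-1}C_2^{q}d^{sq(q-1)}\to\infty$ as $d\to0$, so $\overline u$ is \emph{not} a supersolution near $\partial\Omega$. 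Your ``expected main obstacle'' paragraph repeats the misreading: the threshold for $d^{s}$-behaviour in Theorem~\ref{avee} is $\beta<s$, not $\beta<2s$. The easy fix is to take $\beta=2s$ instead (admissible since $2s<s+1$ for every $s<1$); then $\zeta\simeq 1$ is merely bounded, and your inequality becomes $g_n(A\zeta)^q\le(AC_2)^qd^{-2s}\le Ad^{-2s}=(-\Delta)^s\overline u$ for $A^{1-q}\ge C_2^{q}$. You lose the sharp upper estimate $u_n\le Cd^{s}$ but retain $u_n\le M$, which is all you actually use downstream (your dominated-convergence step only needs $|g_nu_n^{q}\phi|\le CM^{q}d^{-s}\in L^{1}$).

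For comparison, the paper avoids pointwise supersolutions altogether. It regularizes the weight to $(d+\tfrac1n)^{-2s}$, obtains a \emph{monotone} sequence $u_n$ from the sublinear comparison principle (Lemma~\ref{compar}), and derives uniform $L^{1}$-type bounds by testing against the torsion function (when $s<\tfrac12$) or against the solution of $(-\Delta)^s\phi=d^{-\beta}$ with $\beta\in(s,1)$ (when $s\ge\tfrac12$), using precisely the case-C asymptotics of Theorem~\ref{avee}. The lower bound $u\ge Cd^{s}$ is obtained from the first eigenfunction $\phi_1$ as subsolution; since $(-\Delta)^s(C\phi_1)=C\lambda_1\phi_1\simeq d^{s}$ \emph{does} vanish at the boundary, your objection to a subsolution with constant right-hand side does not apply to it.
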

}

In the case $1<q\le 2^*$ and $s\in [\frac 12,1)$, we will show the existence of an energy solution. Taking into consideration the nonlocal nature of the operator, the proofs are more complicated than the local case, and fine computations are needed in order to get compactness results and apriori estimates.
Notice that the hypothesis $s\in [\frac 12,1)$ is needed since we will use systematically the Hardy inequality and some Liouville type results that hold for $s\ge \frac 12$. { Here we are able to show the next existence result.}

{
\begin{Theorem}\label{introd11}
Assume that $\frac{1}{2}\leq s<1$, then
\begin{enumerate}
\item If $1< q <2^{*}_{s}-1$, the problem \eqref{PHI} has a bounded positive solution  $u\in H^{s}_0(\O)\cap L^{\iy}(\O)$.
\item If $q=2^*-1$ and $\O=B_R(0)$, the problem \eqref{PHI} has a bounded radial positive solution  $u\in H^{s}_0(\O)\cap L^{\iy}(\O)$
\end{enumerate}
\end{Theorem}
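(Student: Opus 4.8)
\noindent\emph{Proposed plan of proof.}

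The plan is to obtain the solutions from a truncated problem, which does carry a good variational structure, via the mountain pass theorem, and then to remove the truncation by means of uniform a priori estimates proved through a blow-up analysis, Liouville type theorems and barrier arguments. For part (1), fix $\Lambda>0$ and replace $u^{q}/d^{2s}$ by $g_{\Lambda}(x,u)/d^{2s}$, where $g_{\Lambda}(x,u)=u^{q}$ for $0\le u\le\Lambda\,d^{s}$ and $g_{\Lambda}$ is continued linearly for $u>\Lambda\,d^{s}$. Truncating at a $d^{s}$-multiple is precisely what tames the singular weight: on $\{u\le\Lambda d^{s}\}$ one has $u^{q+1}d^{-2s}\le\Lambda^{q-1}u^{2}d^{-s(3-q)}$ with $s(3-q)<2s$, so the primitive satisfies $\int_{\O}G_{\Lambda}(x,u)d^{-2s}dx\le C\Lambda^{q-1}\int_{\O}u^{2}d^{-s(3-q)}dx\le C'\Lambda^{q-1}\|u\|^{2}$, interpolating the Hardy inequality \eqref{Ter_Hardy} with the Sobolev embedding (here $\|u\|^{2}=\frac{a_{N,s}}{2}\iint_{D_\O}\frac{|u(x)-u(y)|^{2}}{|x-y|^{N+2s}}dxdy$ is the energy in \eqref{Ter_Hardy}). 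Hence $J_{\Lambda}(u)=\tfrac12\|u\|^{2}-\int_{\O}G_{\Lambda}(x,u)d^{-2s}dx$ is well defined on $H^{s}_{0}(\O)$, $0$ is a strict local minimum ($q>1$, so $g_{\Lambda}$ is superlinear at the origin), $J_{\Lambda}$ is unbounded below along rays for $\Lambda$ large, and the Palais--Smale condition holds in this subcritical range. The mountain pass theorem then yields a positive solution $u_{\Lambda}\in H^{s}_{0}(\O)\cap L^{\iy}(\O)$ of the truncated problem, positive by the strong maximum principle.

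The heart of the proof is the uniform estimate $u_{\Lambda}\le C\,d^{s}$ in $\O$ with $C$ independent of $\Lambda$. Its interior part $\|u_{\Lambda}\|_{L^{\iy}(\O)}\le C$ is proved by contradiction: if $M_{\Lambda}:=\|u_{\Lambda}\|_{\iy}\to\iy$ at points $x_{\Lambda}$, one rescales $v_{\Lambda}(y)=M_{\Lambda}^{-1}u_{\Lambda}(x_{\Lambda}+\mu_{\Lambda}y)$ with $\mu_{\Lambda}$ normalising the equation and uses the interior $C^{\alpha}$ regularity for $(-\D)^{s}$ with bounded data to pass to the limit; according to the regime of the truncated equation at $x_{\Lambda}$ and to whether $x_{\Lambda}$ stays away from $\p\O$ or not, the limit is a nontrivial bounded nonnegative solution of $(-\D)^{s}v=v^{q}$ in $\ren$, of $(-\D)^{s}v=v$ in $\ren$, or of a Hardy type problem in a half-space with $v\equiv0$ outside --- each excluded by a Liouville theorem valid for $1<q<2^{*}_{s}-1$ and $s\ge\tfrac12$ (for $(-\D)^{s}v=v$ one uses that a bounded nonnegative $s$-superharmonic function on $\ren$ is constant, hence $0$). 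The bound $u_{\Lambda}\le C\,d^{s}$ then follows by comparison near $\p\O$ with the supersolution $C\,w_{0}$, $w_{0}$ being the auxiliary function of Theorem \ref{avee} with its precise $d^{s}$ behaviour, using that once $u_{\Lambda}\le C$ the datum is there $\le C\,d^{s(q-2)}\in L^{1}(\O)$ (since $s(q-2)>-1$ for $s\ge\tfrac12$, $q>1$). Consequently, for $\Lambda$ larger than this $C$ the truncation is inactive, and $u_{\Lambda}$ solves \eqref{PHI}: it is an energy solution in $H^{s}_{0}(\O)\cap L^{\iy}(\O)$, positive in $\O$.

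For part (2), $q=2^{*}_{s}-1$ and $\O=B_{R}(0)$, one runs the same scheme in the radial space $H^{s}_{0,\mathrm{rad}}(B_{R})$; the new difficulty is the loss of compactness at the critical exponent, which makes the Palais--Smale condition hold only below a threshold. One therefore has to check that the mountain pass level satisfies $c_{\Lambda}<\tfrac{s}{N}S^{N/2s}$, which is achieved by a Brezis--Nirenberg type test-function computation with truncated bubbles $U_{\delta,0}$, in which the boundary weight $d^{-2s}$ only affects the subleading terms favourably; moreover concentration on $\p B_{R}$ is ruled out because there $(R-|x|)^{-2s}$ is singular while radial symmetry would spread the profile over a whole sphere. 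One then concludes, exactly as in part (1), to a bounded positive radial energy solution of \eqref{PHI}.

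The step I expect to be the main obstacle is the uniform $L^{\iy}$ and $d^{s}$ boundary estimate: one must identify, case by case according to the regime of the truncated equation and the relative sizes of $M_{\Lambda}$ and $d(x_{\Lambda})$, the exact limiting problem produced by a blow-up near $\p\O$ and match it with a genuinely applicable Liouville theorem --- and it is here that $s\ge\tfrac12$ enters decisively, being needed both for the boundary Hardy inequality \eqref{Ter_Hardy} and for the half-space Liouville results; in part (2) the analogous delicate point is the sharp energy estimate ruling out concentration in the presence of the boundary-singular weight.
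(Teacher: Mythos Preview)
Your overall strategy---approximate, apply the mountain pass theorem, then prove uniform a priori bounds via blow-up and Liouville theorems---is the same as the paper's, but your implementation diverges in both parts, and in part (1) there is a genuine gap.

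\medskip

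\textbf{Part (1).} The paper does \emph{not} truncate the nonlinearity; it regularises the singular weight, replacing $d^{-2s}$ by $(d+\tfrac1n)^{-2s}$. The approximated problem $(-\Delta)^s u_n = u_n^q/(d+\tfrac1n)^{2s}$ then has a bounded, nonsingular right-hand side, so the mountain pass geometry, Palais--Smale, and the blow-up analysis are all standard. In the blow-up the paper only meets two limiting problems, $(-\Delta)^s v=Cv^q$ in $\ren$ and in $\ren_+$, both covered by the quoted Liouville theorems. To exclude $u\equiv 0$, the paper does \emph{not} prove $u_n\le Cd^s$; instead it shows $\|u_n\|_\infty\ge\overline C>0$ via the Hardy inequality, proves the maximum points stay away from $\partial\Omega$, and then uses interior compactness.

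Your truncation at the level $\Lambda d^s$ keeps the singular weight in the problem and introduces an asymptotically \emph{linear} regime. This creates two difficulties you do not resolve. First, with a linear tail the functional is asymptotically quadratic, so the Palais--Smale condition is not automatic and can fail at resonance of the weighted linear problem $(-\Delta)^s v=q\Lambda^{q-1}d^{s(q-3)}v$; you cannot simply invoke ``the subcritical range''. Second, and more seriously, your blow-up list includes ``a Hardy type problem in a half-space'', but no Liouville theorem for $(-\Delta)^s v=C\,x_N^{-\gamma}v$ (or $x_N^{-\gamma}v^q$) in $\ren_+$ is available in the paper, and the cited half-space result is only for the pure power $v^q$. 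Finally, your barrier step (``once $u_\Lambda\le C$ the datum is $\le C d^{s(q-2)}$'') is circular: $u_\Lambda\le C$ alone gives only $u_\Lambda^q/d^{2s}\le Cd^{-2s}\notin L^1$ when $s\ge\tfrac12$, and in the linear regime the coefficient $d^{s(q-3)}$ fails to be integrable for $1<q\le 3-\tfrac1s$. The paper sidesteps all of this by desingularising the weight.

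\medskip

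\textbf{Part (2).} The paper does not run a mountain pass or a Brezis--Nirenberg comparison. It defines the weighted Sobolev quotient $S(R)$ over radial functions, proves $S(R)=R^{4s/2^*_s}S(1)$ by scaling, picks $R$ small so that $S(R)<S$, and then shows by concentration--compactness that a minimising sequence cannot concentrate at the origin (radial symmetry plus the pointwise decay bound handle the boundary). This is more direct than your proposed route and uses the ball hypothesis in an essential way through the scaling identity, rather than through a test-function energy expansion.

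\medskip

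In short: your plan is plausible in outline, but the choice of approximation makes the analysis substantially harder than the paper's, and the half-space Liouville step you need in the truncated/linear regime is not available with the tools quoted here.
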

}
The paper is organized as follows.
In the first section we give some auxiliary results, the concepts of solutions that we will use and some functional tools that will be needed along of  the paper.

The case $q<1$ is considered in Section \ref{sec1}. We will prove that the situation is totally different comparing with the local case. Namely, for all $s\in (0, 1)$ and for all $q\in (0,1)$, we show the existence of a distributional solution to problem \eqref{PHI}. In the case where $s<\frac12$ and under a convenient condition on $q$, we are able to prove that the solution is in a suitable fractional Sobolev space.

In Section \ref{sec2}, we treat the case $1<q\le 2^*_{s}-1$. The main idea is to combine blowing-up arguments and Liouville type theorems in order to show a priori estimates. One of the main  tools will be the Hardy inequality stated in \eqref{Ter_Hardy}. In Subsection \ref{ssec00} we treat the case $q<2^*_s-1$, then, as in \cite{ABDM}, using suitable variational arguments and Blow-up technics, we are able to prove the existence of a bounded positive solution. The critical case, $q=2^*_s-1$, is studied in Subsection \ref{sub1} under the hypothesis $\O=B_R(0)$. Then taking advantage of the radial structure of the problem, we are able to show the existence of a nontrivial radial solution.

Finally, in the last section we deal with the case $q<0$.

\section{The functional setting and tools}\label{sec1}

Let $s\in (0,1)$ and $\O\subset \ren$. We define the fractional Sobolev
space $H^{s}(\Omega)$ as
$$
H^{s}(\Omega)\equiv
\Big\{ u\in
L^2(\O):\dint_{\O}\dint_{\O}\frac{|u(x)-u(y)|^2}{|x-y|^{N+2s}}dxdy<+\infty\Big\}.
$$
$H^{s}(\O)$ is a Banach space endowed with the norm
$$
\|u\|_{H^{s}(\O)}=||u||_{L^2(\O)}+ \Big(\dfrac{a_{N,s}}{2}\dint_{\O}\dint_{\O}\frac{|u(x)-u(y)|^2}{|x-y|^{N+2s}} dx dy\Big)^{\frac
12}.
$$
Since we are working in a bounded domain, then we will use the space $H^s_0(\O)$,
$$
H^s_0(\O)=\{u\in H^s(\ren) \hbox{ with } u=0 \hbox{  a.e. \: in   } \ren\backslash \Omega \}
$$
endowed with the norm
$$
||u||^{2}_{H^s_0(\O)}=\dfrac{a_{N,s}}{2}\iint_{D_\O}\dfrac{(u(x)-u(y))^2}{|x-y|^{N+2s}}dxdy,
$$
where $D_\O=\mathbb{R}^{2N}\backslash (\O \times \O)$. It is clear that $(H^s_0(\O), ||.||_{H^s_0(\O)})$ is a Hilbert space.
We refer to \cite{DPV} and \cite{Adams} for more properties of the previous spaces.

The next Sobolev inequality is proved in \cite{DPV}, see also \cite{Ponce} for a simple proof.
\begin{Theorem} \label{Sobolev}
Assume that $0<s<1$ with $2s<N$. There exists a positive constant $S\equiv S(N,s)$ such that for all
$u\in C_{0}^{\infty}(\ren)$, we have
$$
S
\Big(\dint_{\mathbb{R}^{N}}|u(x)|^{2_{s}^{*}}dx\Big)^{\frac{2}{2^{*}_{s}}}\le
\dfrac{a_{N,s}}{2}\iint_{\re^{2N}}
\dfrac{|u(x)-u(y)|^{2}}{|x-y|^{N+2s}}\,dxdy
$$
with $2^{*}_{s}= \dfrac{2N}{N-2s}$.
\end{Theorem}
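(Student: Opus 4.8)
The plan is to deduce the inequality from the classical $L^{2}\to L^{2^{*}_{s}}$ mapping property of the Riesz potential of order $s$. The first step is the Fourier–side identity: for $u\in C_{0}^{\infty}(\ren)$,
\[
\frac{a_{N,s}}{2}\iint_{\ren\times\ren}\frac{|u(x)-u(y)|^{2}}{|x-y|^{N+2s}}\,dx\,dy=\int_{\ren}|\xi|^{2s}|\widehat u(\xi)|^{2}\,d\xi=\big\|(-\Delta)^{s/2}u\big\|_{L^{2}(\ren)}^{2}.
\]
I would prove this by writing the double integral as $\int_{\ren}\big(\int_{\ren}|u(x+z)-u(x)|^{2}|z|^{-N-2s}\,dz\big)dx$, applying Plancherel in $x$ to get $\int_{\ren}|\widehat u(\xi)|^{2}\big(\int_{\ren}|e^{i\xi\cdot z}-1|^{2}|z|^{-N-2s}\,dz\big)d\xi$, and evaluating the inner integral by a rotation together with the scaling $z\mapsto |\xi|^{-1}z$, which produces $|\xi|^{2s}$ times a finite dimensional constant (finiteness uses exactly $0<s<1$); that constant equals $2/a_{N,s}$, which is precisely what the normalization in \eqref{fraccionario} is designed to achieve. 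Hence it suffices to show $\|u\|_{L^{2^{*}_{s}}(\ren)}\le C(N,s)\,\|(-\Delta)^{s/2}u\|_{L^{2}(\ren)}$.

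Next I would set $g:=(-\Delta)^{s/2}u$. Since $u$ is smooth with compact support, $g$ is smooth, $\widehat g(\xi)=|\xi|^{s}\widehat u(\xi)\in L^{2}$, and $g(x)=O(|x|^{-N-s})$ at infinity, so $g\in L^{1}\cap L^{\infty}(\ren)$ and $\|g\|_{L^{2}}=\|(-\Delta)^{s/2}u\|_{L^{2}}$. Because the Fourier transform of $|x|^{-(N-s)}$ is a positive multiple of $|\xi|^{-s}$ (valid as $0<N-s<N$), the function $u=(-\Delta)^{-s/2}g$ is the Riesz potential of $g$,
\[
u(x)=\gamma_{N,s}\int_{\ren}\frac{g(y)}{|x-y|^{N-s}}\,dy=:I_{s}g(x),
\]
the integral converging absolutely since the kernel is locally integrable and $g\in L^{1}\cap L^{\infty}$.

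It then remains to prove $\|I_{s}g\|_{L^{2^{*}_{s}}(\ren)}\le C(N,s)\|g\|_{L^{2}(\ren)}$ — the Hardy–Littlewood–Sobolev inequality, with $2^{*}_{s}=2N/(N-2s)$ forced by scaling. A self-contained argument: for each $R>0$ split $I_{s}|g|(x)$ at $|x-y|=R$; the near part is controlled via dyadic annuli by the Hardy–Littlewood maximal function, $\int_{|x-y|<R}|g(y)||x-y|^{-(N-s)}\,dy\le C\,R^{s}\,Mg(x)$, and the far part by Cauchy–Schwarz, $\int_{|x-y|\ge R}|g(y)||x-y|^{-(N-s)}\,dy\le C\,R^{s-N/2}\|g\|_{L^{2}}$, where the hypothesis $N>2s$ is exactly what makes $|x-y|^{-2(N-s)}$ integrable at infinity. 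Optimizing over $R$ gives the pointwise bound $|I_{s}g(x)|\le C\,(Mg(x))^{1-\frac{2s}{N}}\|g\|_{L^{2}}^{\frac{2s}{N}}$; raising to the power $2^{*}_{s}$, using the identity $(1-\tfrac{2s}{N})2^{*}_{s}=2$ and the maximal theorem $\|Mg\|_{L^{2}}\le C\|g\|_{L^{2}}$, yields the desired bound.

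Chaining the three steps, $\|u\|_{L^{2^{*}_{s}}}=\|I_{s}g\|_{L^{2^{*}_{s}}}\le C(N,s)\|g\|_{L^{2}}=C(N,s)\big(\tfrac{a_{N,s}}{2}\iint_{\ren\times\ren}|u(x)-u(y)|^{2}|x-y|^{-N-2s}\,dx\,dy\big)^{1/2}$, i.e. the theorem with $S=C(N,s)^{-2}$. The only point needing real care is tracking the constant in the Fourier identity of the first step; the boundedness of $I_{s}$ is the classical Sobolev inequality for Riesz potentials and may alternatively be quoted from \cite{DPV} (or \cite{Ponce} for a real-variable proof along the lines above). A different route would use the Caffarelli–Silvestre extension — extend $u$ to $\ren\times(0,\infty)$ as the minimizer $U$ of $\int y^{1-2s}|\nabla U|^{2}$, note this energy is a constant multiple of $\|(-\Delta)^{s/2}u\|_{L^{2}}^{2}$, and invoke the weighted trace Sobolev inequality — but then the main difficulty simply moves into establishing that trace inequality.
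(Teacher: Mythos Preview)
Your proof is correct and complete. Note, however, that the paper does not actually prove this theorem: it is stated as a tool and attributed to \cite{DPV} and \cite{Ponce}, so there is no ``paper's own proof'' to compare against. Your argument---the Fourier identity linking the Gagliardo seminorm to $\|(-\Delta)^{s/2}u\|_{L^2}$, followed by the Riesz-potential representation $u=I_s g$ and Hedberg's pointwise bound $|I_s g|\le C(Mg)^{1-2s/N}\|g\|_{L^2}^{2s/N}$---is precisely the real-variable route alluded to by the citation of \cite{Ponce}; it is the standard elementary proof and all steps are justified (in particular your decay claim $g(x)=O(|x|^{-N-s})$ for $u\in C_0^\infty$ is right, since outside $\mathrm{supp}\,u$ one has $(-\Delta)^{s/2}u(x)=-a_{N,s/2}\int u(y)|x-y|^{-N-s}\,dy$).
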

Let us begin by stating the sense in  which the solution is defined. Since we are looking for solutions to \eqref{PHI} with right hand side in $L^1(\O)$, then we will use systematically the next definition.

\begin{Definition}\label{defph4}
Assume that $h\in L^1(\O)$. We say that $u\in L^1(\O)$ is a weak solution to problem
\begin{equation}\label{hho}
\left\{
\begin{array}{rcll}
(-\D)^{s} u &= & h & \text{ in }\Omega , \\
u & = & 0 & \text{  in }\ren\backslash \Omega , \\
\end{array}%
\right.
\end{equation}
if $u=0$ in $\ren\backslash \Omega$ and for all $\psi\in \mathbb{X}_s$, we have
$$
\io u((-\D)^{s}\psi) dx =\io h\psi dx.
$$
where
$$
\mathbb{X}_s\equiv \Big\{\psi\in \mathcal{C}(\ren)\,|\,\text{supp}(\psi)\subset \overline{\O},\,\, (-\Delta )^s\psi(x) \hbox{ pointwise defined and   } |(-\Delta )^s\psi(x)|<C \hbox{ in  } \O\Big\}.
$$
\end{Definition}
In the same way we define the sense of distributional solution to \eqref{hho}.
\begin{Definition}\label{distri}
Let $h\in L^1_{loc}(\Omega)$, we say that $u\in L^1(\O)$ is a distributional solution to problem \eqref{hho} if
for all $\psi\in \mathcal{C}^\infty_0(\O)$, we have
$$
\io u((-\D)^{s}\psi) dx =\io h\psi dx.
$$
\end{Definition}

The next existence result is proved in \cite{LPPS}, \cite{CV1} and \cite{AAB}.
\begin{Theorem}\label{entropi}
Assume that $h\in L^1(\O)$, then problem \eqref{hho}  has a unique weak  solution $u$  that is obtained as the limit of $\{u_n\}_{n\in \mathbb{N}}$, the
sequence of the unique solutions to the approximating problems
\begin{equation}\label{proOO}
\left\{\begin{array}{rcll} (-\Delta)^s u_n &= & h_n(x) & \mbox{  in  }\O,\\ u_n &= & 0 & \mbox{ in } \ren\backslash\O,
\end{array}
\right.
\end{equation}
with $h_n=T_n(h)$ and $T_n(\s)=\max(-n, \min(n,\s))$. Moreover,
\begin{equation} \label{tku}
T_k(u_n)\to T_k(u)\hbox{  strongly in }   H^{s}_{0}(\Omega), \quad \forall k > 0,
\end{equation}
\begin{equation} \label{L1u}
u \in L^\theta(\Omega) \,, \qquad  \forall  \ \theta\in \big[1, \frac{N}{N-2s}\big)\,
\end{equation}
 and
\begin{equation}\label{L1du}
\big|(-\Delta)^{\frac{s}{2}}   u\big| \in L^r(\Omega) \,, \qquad \forall  \  r \in \big[1,  \frac{N}{N-s} \big) \,.
\end{equation}
Furthermore
\begin{equation}\label{L1duu}
u_n\to u\mbox{  strongly in } W^{s,q_1}_0(\O)\mbox{  for all   }q_1<\frac{N}{N-2s+1},
\end{equation}
and $u$ is an entropy solution to problem \eqref{hho} in the sense that
\begin{equation}\label{entro001}
\iint_{R_\rho}\frac{|u(x)-u(y)|}{|x-y|^{N+2s}}dxdy\to 0\mbox{   as   }\rho\to \infty,
\end{equation}
where
\begin{equation*}
R_\rho=\bigg\{(x,y)\in \re ^{2N}: \rho+1\le \max\{|u(x)|,|u(y)| \}\mbox{  with  } \min\{|u(x)|,|u(y)| \}\le \rho\mbox{  or  }u(x)u(y)<0\bigg\}
,\end{equation*}
and for all $k>0$ and $\varphi\in H^s_0(\O) \cap L^{\infty}(\O)$, we
have
\begin{equation}\label{eq:alcala}
\begin{array}{lll}
&\dyle \frac 12\iint_{D_\O}
\frac{(u(x)-u(y))[T_k(u(x)-\varphi(x))-T_k(u(y)-\varphi(y))]}{|x-y|^{N+2s}}dxdy\le \\
&\dyle \io h(x)T_k(u(x)-\varphi(x)) \, dx.
\end{array}
\end{equation}

\end{Theorem}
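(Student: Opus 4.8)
The plan is to adapt the Boccardo--Gallou\"et scheme for $L^1$ data to the nonlocal operator. Since $h_n=T_n(h)\in L^\infty(\O)\subset L^2(\O)$, the coercivity of the bilinear form associated with $(-\D)^s$ on $H^s_0(\O)$ (a consequence of Theorem \ref{Sobolev}) together with the Lax--Milgram theorem gives a unique $u_n\in H^s_0(\O)$ solving \eqref{proOO}. Testing with $T_k(u_n)\in H^s_0(\O)\cap L^\infty(\O)$ and using the pointwise inequality $(a-b)\big(T_k(a)-T_k(b)\big)\ge \big(T_k(a)-T_k(b)\big)^2$ one gets
$$
\|T_k(u_n)\|_{H^s_0(\O)}^2\;\le\;\io h_n\,T_k(u_n)\,dx\;\le\;k\,\|h\|_{L^1(\O)} .
$$

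From this single estimate I would run the classical level-set argument: Theorem \ref{Sobolev} gives $\|T_k(u_n)\|_{L^{2^*_s}(\O)}^2\le Ck$, which controls $\meas\{|u_n|>k\}$ and, after summation over dyadic truncation levels, shows that $\{u_n\}$ is bounded in $L^\theta(\O)$ for every $\theta<\frac{N}{N-2s}$, that $\{|(-\D)^{s/2}u_n|\}$ is bounded in $L^r(\O)$ for every $r<\frac{N}{N-s}$, and that $\{u_n\}$ is bounded in $W^{s,q_1}_0(\O)$ for every $q_1<\frac{N}{N-2s+1}$ (the nonlocal analogue of the Boccardo--Gallou\"et gradient bound). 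Along a subsequence we obtain $u_n\to u$ a.e. and, by Vitali, strongly in $L^\theta(\O)$, together with $u_n\weakly u$ in $W^{s,q_1}_0(\O)$ and $T_k(u_n)\weakly T_k(u)$ in $H^s_0(\O)$ for each $k$; weak lower semicontinuity transfers the bounds to $u$, giving \eqref{L1u} and \eqref{L1du}.

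The delicate point is to upgrade these weak convergences to the strong statements \eqref{tku} and \eqref{L1duu}. For \eqref{tku} I would use a nonlocal Boccardo--Murat argument: choosing a test function of the form $T_k\big(u_n-T_j(u)\big)$ and letting first $n\to\infty$ and then $j\to\infty$, one isolates the ``diagonal'' contribution of the double integral, bounds it from below by Fatou's lemma using the a.e. convergence, and absorbs the long-range cross terms using the equiintegrability coming from the $L^\theta$ bound; this forces the Gagliardo seminorms of $T_k(u_n)$ to converge, hence $T_k(u_n)\to T_k(u)$ in $H^s_0(\O)$, and a similar untruncated argument gives \eqref{L1duu}. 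Once \eqref{tku} holds, the passage to the limit is routine: for $\psi\in\mathbb{X}_s$ we have $\io u_n\,(-\D)^s\psi\,dx=\io h_n\,\psi\,dx$, and since $|(-\D)^s\psi|$ is bounded while $u_n\to u$ in $L^1(\O)$ and $h_n\to h$ in $L^1(\O)$ (dominated convergence, $|h_n|\le|h|$), both sides converge, so $u$ is a weak solution in the sense of Definition \ref{defph4}; testing \eqref{proOO} with $T_k(u_n-\varphi)$ for $\varphi\in H^s_0(\O)\cap L^\infty(\O)$ and using \eqref{tku} together with lower semicontinuity of the nonlocal form produces the entropy inequality \eqref{eq:alcala}, and the uniform estimates together with absolute continuity of the integral give \eqref{entro001}.

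Uniqueness of the weak solution follows by duality. Given $g\in L^\infty(\O)$, let $\psi$ solve $(-\D)^s\psi=g$ in $\O$ with $\psi=0$ in $\ren\setminus\O$; by the regularity theory for the fractional Laplacian on the $\mathcal C^{1,1}$ domain $\O$, $\psi\in\mathcal C(\ren)$ and $(-\D)^s\psi$ is bounded, so $\psi\in\mathbb{X}_s$. If $u$ is a weak solution with datum $h\equiv0$, then $\io u\,g\,dx=\io u\,(-\D)^s\psi\,dx=0$ for every $g\in L^\infty(\O)$, whence $u\equiv0$; linearity gives uniqueness, and since every subsequential limit of $\{u_n\}$ coincides with this unique solution, the whole sequence converges. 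I expect the strong convergence \eqref{tku} to be the main obstacle, precisely because truncation does not localize the nonlocal energy and the long-range interactions of the kernel must be handled by hand.
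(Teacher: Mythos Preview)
The paper does not prove Theorem \ref{entropi}: it is stated as a known result and attributed to \cite{LPPS}, \cite{CV1} and \cite{AAB}, with no argument given in the text. There is therefore no ``paper's own proof'' to compare against.

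That said, your sketch is essentially the strategy carried out in those references, in particular \cite{LPPS} and \cite{AAB}: Lax--Milgram for the truncated problems, the test with $T_k(u_n)$ yielding $\|T_k(u_n)\|_{H^s_0(\O)}^2\le k\|h\|_{L^1}$, the Boccardo--Gallou\"et level-set/Marcinkiewicz argument to extract the $L^\theta$, $L^r$ and $W^{s,q_1}_0$ bounds, and the duality argument for uniqueness via a test function $\psi\in\mathbb{X}_s$ solving $(-\D)^s\psi=g$. Your instinct that the strong convergence \eqref{tku} is the heart of the matter is correct; in the cited papers this is obtained precisely along the lines you indicate, by testing with $T_k(u_n-T_j(u))$ (or variants such as $T_1(u_n-T_j(u_n))$) and exploiting the algebraic inequality $(a-b)(T_k(a)-T_k(b))\ge (T_k(a)-T_k(b))^2$ together with the equiintegrability coming from the Marcinkiewicz bound. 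The one point you pass over quickly is the proof of \eqref{entro001}: in \cite{AAB} this is derived from the tail estimate obtained by testing with $T_1(u_n-T_\rho(u_n))$, which bounds the contribution of $R_\rho$ by $\|h\|_{L^1(\{|u|>\rho\})}$; since $u\in L^1(\O)$ this goes to zero as $\rho\to\infty$. Apart from that, your outline matches the published arguments.
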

As a consequence of Picone inequality to the fractional operator, see \cite{LPPS}, we have the next comparison principle proved in \cite{LPPS} that extends the one obtained by Brezis-Kamin in \cite{BK} for the local case.
\begin{Lemma}\label{compar}
Assume that $0<s<1$ and let $f(x,t)$ be a Caratheodory function such that $\dfrac{f(x,t)}{t}$ is decreasing for $t>0$. Suppose $u,v\in H^{s}_{0}(\O)$ are such that
\begin{equation}\label{zzz}
\left\{
\begin{array}{cc}
(-\D)^{s} u \geq f(x,u) & \text{ in }\Omega , \\
(-\D)^{s} v \leq f(x,v) & \text{ in }\Omega . \\
\end{array}%
\right.
\end{equation}
Then $u\geq v$ a.e. in $\O$.
\end{Lemma}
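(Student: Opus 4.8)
The plan is to deduce the comparison from the fractional Picone inequality of \cite{LPPS}, following the scheme of Brezis--Kamin. Suppose, towards a contradiction, that $\Omega_+:=\{x\in\Omega:\ v(x)>u(x)\}$ has positive measure. After the usual reduction — using the strong maximum principle for the supersolution $u$ (so that, in the situations where the lemma is applied, $u,v>0$ in $\Omega$) and, if necessary, a truncation $v\mapsto v\wedge k$ — we may assume $u,v>0$ in $\Omega$, hence $v>u>0$ on $\Omega_+$. Set $\Phi:=(v^2-u^2)^+\ge0$ and take as test functions
$$\phi:=\frac{\Phi}{u},\qquad \psi:=\frac{\Phi}{v},$$
which are nonnegative, vanish outside $\Omega_+$, and (under the standing reductions) belong to $H^s_0(\Omega)$. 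Testing $(-\Delta)^s u\ge f(x,u)$ against $\phi$ and $(-\Delta)^s v\le f(x,v)$ against $\psi$, and using $u\phi=v\psi=\Phi$, we get
$$\langle u,\phi\rangle_s-\langle v,\psi\rangle_s\ \ge\ \int_{\Omega_+}(v^2-u^2)\left(\frac{f(x,u)}{u}-\frac{f(x,v)}{v}\right)dx\ \ge\ 0,$$
where $\langle\cdot,\cdot\rangle_s$ is the Dirichlet form of $(-\Delta)^s$ on $H^s_0(\Omega)$; the last inequality holds because $v>u>0$ on $\Omega_+$ and $t\mapsto f(x,t)/t$ is decreasing.

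The heart of the matter is the opposite bound $\langle u,\phi\rangle_s-\langle v,\psi\rangle_s\le0$, which is exactly the discrete fractional Picone inequality. Indeed, a direct computation gives, for a.e.\ $(x,y)$, the identity
\[
\begin{aligned}
&\big(u(x)-u(y)\big)\left(\frac{\Phi(x)}{u(x)}-\frac{\Phi(y)}{u(y)}\right)-\big(v(x)-v(y)\big)\left(\frac{\Phi(x)}{v(x)}-\frac{\Phi(y)}{v(y)}\right)\\
&\qquad\qquad=\big(u(x)v(y)-u(y)v(x)\big)\left(\frac{\Phi(x)}{u(x)v(x)}-\frac{\Phi(y)}{u(y)v(y)}\right),
\end{aligned}
\]
and this quantity is $\le0$: when $x,y\in\Omega_+$ it equals $-u(x)u(y)\big(p-q\big)^2\big(1+\tfrac1{pq}\big)$ with $p=v(x)/u(x)>1$, $q=v(y)/u(y)>1$ (the AM--GM inequality, with equality iff $p=q$); when exactly one of $x,y$ lies in $\Omega_+$ the two factors on the right have opposite signs; and when neither does it vanishes. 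Multiplying by $\tfrac{a_{N,s}}{2}|x-y|^{-N-2s}$ and integrating over $D_\Omega$ yields $\langle u,\phi\rangle_s-\langle v,\psi\rangle_s\le0$.

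Comparing the two estimates forces equality everywhere; in particular the nonpositive kernel above must vanish for a.e.\ $(x,y)\in D_\Omega$. If $\Omega\setminus\Omega_+$ had positive measure, pairs with $x\in\Omega_+$ and $y\in\Omega\setminus\Omega_+$ would make that kernel strictly negative on a set of positive measure, which is impossible; hence $\Omega_+=\Omega$ up to a null set, and then the equality case $p=q$ gives $v(x)/u(x)=v(y)/u(y)$ a.e., i.e.\ $v=c\,u$ for a constant $c>1$. Plugging this into the hypotheses yields $c\,f(x,u)\le(-\Delta)^s v\le f(x,cu)$, that is $f(x,u)/u\le f(x,cu)/(cu)$ a.e., which contradicts the (strict) monotonicity of $t\mapsto f(x,t)/t$ since $cu>u$. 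Therefore $|\Omega_+|=0$, i.e.\ $u\ge v$ a.e.\ in $\Omega$. I expect the real difficulty to lie not in the Picone algebra, which is elementary, but in justifying rigorously that $\phi$ and $\psi$ are admissible test functions in $H^s_0(\Omega)$ and that all the pairings are finite; in the nonlocal setting this requires care with the boundary behaviour of $u$ and $v$ (lower bounds of the type $u\gtrsim d^s$, or $L^\infty$ bounds together with truncation), and is where the regularity hypotheses on $\Omega$ enter.
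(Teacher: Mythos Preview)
The paper does not actually prove this lemma; it merely cites \cite{LPPS} and remarks that the result is a consequence of the fractional Picone inequality, extending the Brezis--Kamin comparison principle to the nonlocal setting. Your proposal reconstructs precisely that argument --- testing the two differential inequalities against $\Phi/u$ and $\Phi/v$ with $\Phi=(v^2-u^2)^+$, and then invoking the pointwise Picone-type identity to force the opposite sign --- so your approach coincides with the one the paper has in mind.

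Your write-up is essentially correct, and you rightly flag the only genuinely delicate point: the admissibility of $\Phi/u$ and $\Phi/v$ as test functions in $H^s_0(\Omega)$, which in \cite{LPPS} is handled under suitable positivity/boundedness assumptions on $u,v$. One small caveat: your final contradiction (from $v=cu$ with $c>1$ to $f(x,u)/u\le f(x,cu)/(cu)$) uses \emph{strict} monotonicity of $t\mapsto f(x,t)/t$; the lemma as stated only says ``decreasing'', so you should either read this as strictly decreasing (as in Brezis--Kamin) or note that when equality $f(x,u)/u=f(x,cu)/(cu)$ holds the conclusion $u\ge v$ need not follow without further hypotheses.
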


In order to prove a priori estimates for approximating problem, we will use the next existence result obtained in \cite{ABBA}.
\begin{Theorem}\label{avee}
Assume that $s\in (0,1)$ and $\beta\in (0,s+1)$. Let $\O$ be a bounded regular domain { ($\mathcal{C}^{1,1}$ is sufficient)}, then the problem
\begin{equation}\label{ave1}
\left\{
\begin{array}{rcll}
(-\D)^{s} \phi &= & \dfrac{1}{d^\beta(x)} & \text{ in }\Omega , \\
\phi & = & 0 & \text{  in }\ren\backslash \Omega , \\
\end{array}%
\right.
\end{equation}
has a distributional solution such that
\begin{enumerate}
\item[A)] if $\beta<s$, then $\phi\backsimeq d^s$,
\item [B)]if $\beta=s$, then $\phi\backsimeq d^s\log (\frac{D}{d(x)})$,
\item [C)] if $\beta\in (s,s+1)$, then $\phi\backsimeq d^{2s-\beta}$.
\end{enumerate}
\end{Theorem}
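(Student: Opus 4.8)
The plan is to realize $\phi$ as the monotone limit of the solutions $\phi_n\in H^s_0(\Omega)\cap L^\infty(\Omega)$ of the truncated problems $(-\Delta)^s\phi_n=f_n:=\min\{d^{-\beta},n\}$ in $\Omega$, $\phi_n=0$ in $\ren\setminus\Omega$; these exist by Theorem \ref{entropi} (the data $f_n$ being bounded), are $\ge 0$, and are nondecreasing in $n$ by the weak comparison principle, so $\phi:=\lim_n\phi_n$ is well defined. Everything then reduces to sandwiching $\phi$, uniformly in $n$, between $c\,w$ and $C\,w$, where
\[
w=w_\beta:=\begin{cases} d^s,&0<\beta<s,\\[1mm] d^s\log(D/d),&\beta=s,\\[1mm] d^{2s-\beta},&s<\beta<s+1,\end{cases}
\]
and $D>\operatorname{diam}\Omega$ is fixed so that $\log(D/d)\ge1$ on $\Omega$. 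Since $2s-\beta>-1$ precisely when $\beta<s+1$, we have $w\in L^1(\Omega)$, hence $\phi\in L^1(\Omega)$, and letting $n\to\infty$ in $\int_\Omega\phi_n(-\Delta)^s\psi=\int_\Omega f_n\psi$ (monotone convergence on the right, dominated convergence on the left, for $\psi\in\mathcal{C}^\infty_0(\Omega)$, using $d^{-\beta}\psi\in L^1$) shows that $\phi$ is a distributional solution of \eqref{ave1} with the asserted boundary behaviour. This passage to the limit is genuinely needed only when $s<\beta<s+1$ and $\beta\ge1$, where $d^{-\beta}\notin L^1(\Omega)$.

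The barriers rest on a single explicit computation in the half-space $\ren_+=\{x_N>0\}$: for every $\gamma\in(-1,2s)$,
\[
(-\Delta)^s\bigl((x_N)_+^{\gamma}\bigr)(x)=B(N,s,\gamma)\,x_N^{\gamma-2s},\qquad x_N>0,
\]
where $B(N,s,\cdot)$ vanishes exactly at the two homogeneous boundary exponents $\gamma=s-1$ and $\gamma=s$, is positive for $\gamma\in(s-1,s)$, and is negative for $\gamma\in(s,2s)$; and, differentiating this identity in $\gamma$ at $\gamma=s$ (where $B$ vanishes and changes sign from $+$ to $-$), $(-\Delta)^s\bigl((x_N)_+^{s}\log(D/x_N)\bigr)=B_0(N,s)\,x_N^{-s}$ with $B_0(N,s)>0$. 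These one-dimensional facts are classical (Getoor, Dyda). Because $\Omega$ is $\mathcal{C}^{1,1}$, the distance $d$ is $\mathcal{C}^{1,1}$ near $\partial\Omega$, and a localization-and-flattening argument upgrades the above to the expansions
\[
(-\Delta)^s(d^{\gamma})(x)=B(N,s,\gamma)\,d(x)^{\gamma-2s}+O\bigl(d(x)^{\gamma-2s+\sigma}\bigr),\qquad d(x)\to0,
\]
for some $\sigma>0$ (with an analogous expansion, remainder $O(d^{-s+\sigma}\log\tfrac1d)$, for $d^{s}\log(D/d)$); in particular for $\gamma=s$ the leading term disappears and a finer analysis gives that $(-\Delta)^s(d^s)$ is bounded near $\partial\Omega$, while away from $\partial\Omega$ all these $(-\Delta)^s(\cdot)$ are bounded functions (as $d^{\gamma}\in L^1(\Omega)$). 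I expect the derivation of these boundary expansions in a general $\mathcal{C}^{1,1}$ domain — controlling the curvature-generated error and the nonlocal tails and checking that the remainder is of strictly lower order — to be the main technical obstacle, and it is precisely where the $\mathcal{C}^{1,1}$ regularity is used.

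With these expansions the barriers are assembled by the standard cut-off gluing. Fix $\eta\in\mathcal{C}^\infty(\ren)$, $0\le\eta\le1$, $\operatorname{supp}\eta\subset\Omega$, $\eta\equiv1$ on $\{d<\delta\}$, and let $\psi_0$ solve $(-\Delta)^s\psi_0=1$ in $\Omega$, $\psi_0=0$ outside, so $\psi_0\simeq d^s$. When $s<\beta<s+1$, set $\gamma=2s-\beta\in(s-1,s)$: since $B(N,s,\gamma)>0$, for $A$ large (to beat, near $\partial\Omega$, the remainder and the bounded cut-off term) and then $M$ large (to beat $d^{-\beta}$ on $\{d\ge\delta\}$), $\overline V=A\,d^{\gamma}\eta+M\psi_0$ is a global supersolution of $(-\Delta)^s\cdot=d^{-\beta}$, and $\underline V=\varepsilon\,d^{\gamma}\eta$ is a subsolution near $\partial\Omega$ for $\varepsilon$ small; as $\gamma<s$, both are $\simeq d^{\gamma}=w$. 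The case $\beta=s$ is identical, with $d^{\gamma}$ replaced by $d^s\log(D/d)$ and $B_0$ in place of $B$. The case $0<\beta<s$ needs one extra idea: here $(-\Delta)^s(d^s)$ is bounded and cannot by itself match the singular right-hand side, so one takes $\overline V=A\,d^s-\lambda\,d^{2s-\beta}\eta+M\psi_0$ with $2s-\beta\in(s,2s)$; because $B(N,s,2s-\beta)<0$, the subtracted term contributes the positive singular amount $\lambda\,|B(N,s,2s-\beta)|\,d^{-\beta}$ to $(-\Delta)^s\overline V$, while — $2s-\beta$ being larger than $s$ — it is negligible against $d^s$, so still $\overline V\simeq d^s$; the subsolution is $\underline V=\varepsilon\,d^s\eta$, which works because $(-\Delta)^s(d^s\eta)$ is bounded whereas $d^{-\beta}\to\infty$ at $\partial\Omega$. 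In every case $\overline V$ and $\underline V$ vanish in $\ren\setminus\Omega$, $\overline V$ is a global supersolution, and $\underline V$ lies below $\phi$ off a neighbourhood of $\partial\Omega$ (where $\phi\ge\phi_1>0$ is bounded below on the relevant compact set), so the comparison principle yields $\phi_n\le\overline V$ for all $n$ — hence $\phi\le\overline V\lesssim w$ — and $\underline V\le\phi$; together with the interior lower bound this gives $\phi\gtrsim w$, i.e.\ $\phi\simeq w$. The remaining items — positivity and interior continuity of $\phi$ (strong maximum principle and interior regularity applied to each $\phi_n$), the admissibility of $\overline V,\underline V$ in the comparison principle (after one further truncation of $d^{2s-\beta}$ if needed), and the bookkeeping of the $O(\cdot)$ remainders — are routine.
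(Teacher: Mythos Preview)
The paper does not prove this theorem: it is quoted without proof from \cite{ABBA} (Abatangelo's thesis), so there is no ``paper's own proof'' to compare against. Your sketch is, in outline, the standard barrier argument used in \cite{ABBA} and in related boundary-regularity literature: the half-space identity $(-\Delta)^s(x_N)_+^{\gamma}=B(N,s,\gamma)\,x_N^{\gamma-2s}$ with roots at $\gamma=s-1,\,s$ and the stated sign pattern, the logarithmic correction at $\gamma=s$ obtained by differentiating in $\gamma$, and the super/subsolution constructions in each of the three regimes (including the trick, when $\beta<s$, of subtracting $\lambda\,d^{2s-\beta}$ to manufacture the missing singular term) are all correct. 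You are also right that the real work is the localization/flattening step transferring the half-space expansions to a $\mathcal{C}^{1,1}$ domain with a lower-order remainder; this is precisely what \cite{ABBA} carries out.

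One point deserves more care than ``routine'': in case C) with $\beta\in(2s,s+1)$ the exponent $\gamma=2s-\beta$ is negative, so your barriers $d^{\gamma}\eta$ blow up at $\partial\Omega$ and lie in $L^1(\Omega)$ but not in $H^s_0(\Omega)$; the comparison principle you invoke must therefore be the one for very weak (dual) solutions, applied to the bounded approximants $\phi_n$ against the singular barrier, not the $H^s_0$ comparison of Lemma~\ref{compar}. This is handled in \cite{ABBA} via the Green-function representation, but it is not literally covered by the tools stated in the present paper.
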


\

{ To deal with the sub-critical case $1<q<2^*_{s}-1$ in problem \eqref{PHI}, we will use suitable Blow-up arguments in order to prove a priori estimates and, as a consequence, we need a Liouville type results for the fractional Laplacian. }

{ We begin by the next result obtained in \cite{JYX}.}

{
\begin{Theorem}\label{blo}
Let  $0<s<1$ and $N>2s$. Suppose that $q<2^*_s-1$, then the problem
\begin{equation*}
(-\D)^s u=u^q, \quad u>0 \quad \hbox{in} \quad\ren,
\end{equation*}%
has no locally bounded solution.
\end{Theorem}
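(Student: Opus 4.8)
The plan is to run the method of moving planes (and then moving spheres) on a positive locally bounded solution $u$, combined with the Caffarelli--Silvestre extension so that the relevant maximum principles are available, and to use the strict subcriticality $q<2^*_s-1$ exactly at the point where the nonlinear term must be ``dominated'' after a Kelvin transform. I would first record the preliminary regularity: since $u$ is locally bounded, positive, and solves $(-\D)^s u=u^q$, a bootstrap on the extension $w$ (which solves $\mathrm{div}(t^{1-2s}\nabla w)=0$ in $\ren\times(0,\infty)$ with conormal datum a multiple of $u^q$ on $\ren\times\{0\}$), together with interior Schauder estimates for the degenerate operator, gives $u\in C^{2}_{loc}(\ren)$ and $u>0$ everywhere, with $(-\D)^s u$ pointwise defined. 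In particular $u$ cannot be constant, since $(-\D)^s c=0<c^q$. Hence it suffices to prove that $u$ must be radially symmetric about \emph{every} point of $\ren$, which forces $u$ to be constant, a contradiction.

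To reach that symmetry conclusion I would Kelvin--transform about an arbitrary point, taken to be the origin: set $\bar u(x)=|x|^{2s-N}u(x/|x|^2)$ for $x\neq 0$. Conformal covariance of $(-\D)^s$ gives
$$(-\D)^s\bar u(x)=|x|^{-\tau}\,\bar u(x)^q\qquad\text{in }\ren\setminus\{0\},\qquad \tau:=(N+2s)-q(N-2s),$$
and the hypothesis $q<2^*_s-1$ is precisely $\tau>0$. The weight $|x|^{-\tau}$ is invariant under rotations about $0$ and under reflections across every hyperplane through $0$, so one can apply the direct method of moving planes to $\bar u$ in an arbitrary direction $e$, sliding $T_\lambda=\{x\cdot e=\lambda\}$ from $\lambda=+\infty$. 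With $w_\lambda=\bar u_\lambda-\bar u$ on the cap $\Sigma_\lambda$, the mean value theorem applied to $t\mapsto t^q$ yields $(-\D)^s w_\lambda=c_\lambda(x)\,w_\lambda$ at points where $w_\lambda<0$, with $0\le c_\lambda(x)\le C|x|^{-\tau}\bar u(x)^{q-1}$; the two structural ingredients are then a \emph{narrow region principle} and a \emph{decay at infinity principle} for antisymmetric functions, both available for $(-\D)^s$ (equivalently for the extension), and the positive power $\tau>0$ is what makes the potential term decay fast enough both to start the process and to propagate it. Pushing $\lambda$ down to its critical value $\lambda_0$ and using the $|x|^{-\tau}$ singularity to exclude $\lambda_0\neq 0$, one gets symmetry of $\bar u$, hence of $u$, across $T_{\lambda_0}$; since $e$ was arbitrary, $u$ is radial about $0$, and since the center was arbitrary, $u$ is radial about every point, hence constant.

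The main obstacle is controlling $u$ at infinity so that the moving planes can be initiated: a locally bounded solution need not decay. This is exactly why one argues on $\bar u$ (which decays like $|x|^{2s-N}$ by construction) instead of on $u$, and why the ``decay at infinity'' principle must be stated for antisymmetric functions that are one-sided bounded only in a $\limsup$ sense at infinity; the singularity of $\bar u$ at the origin is then either never touched (the extremal plane passes through $0$) or shown a posteriori to be removable. An alternative route, which avoids moving planes but needs its own a priori decay input, is the extension plus a Rellich--Pohozaev identity on half-balls $B_R^+$: testing the extended equation against $x\cdot\nabla w+\tfrac{N-2s}{2}w$, integrating, and letting $R\to\infty$ produces
$$\Big(\tfrac{N-2s}{2}-\tfrac{N}{q+1}\Big)\int_{\ren}u^{q+1}\,dx=0,$$
whose bracket is negative exactly when $q<2^*_s-1$, forcing $u\equiv 0$; the delicate point there is precisely the vanishing of the boundary terms as $R\to\infty$, which again rests on the Kelvin-transform/blow-down analysis of the behavior at infinity.
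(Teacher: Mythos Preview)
The paper does not actually prove this theorem: it is stated in the preliminaries section and attributed to \cite{JYX} (Jin--Li--Xiong), with no argument given. So there is no ``paper's own proof'' to compare against; the result is quoted as a tool.

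That said, your outline is essentially the standard route by which this Liouville theorem is established in the literature (Chen--Li--Ou for integral equations, Jin--Li--Xiong via the extension, and related works): Kelvin transform to obtain a solution $\bar u$ with built-in decay $|x|^{2s-N}$ at infinity and a subcritical weight $|x|^{-\tau}$, $\tau=(N+2s)-q(N-2s)>0$; moving planes (or spheres) on $\bar u$ using narrow-region and decay-at-infinity maximum principles for antisymmetric functions; the strict positivity of $\tau$ forces the limiting hyperplane through the Kelvin center, giving radial symmetry about that center; varying the center yields constancy, which is incompatible with $(-\Delta)^s c=0<c^q$. Your identification of the subcriticality condition with $\tau>0$ and of the role of the weight in pinning the critical plane is exactly right.

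One point that would need more care in a full write-up is the behaviour of $\bar u$ near the origin: since the hypothesis is only that $u$ is \emph{locally} bounded, nothing is assumed about $u$ at infinity, so $\bar u(x)=|x|^{2s-N}u(x/|x|^2)$ may be genuinely singular at $0$ and one cannot simply invoke standard maximum principles across that point. In practice this is handled either by working on $\Sigma_\lambda\setminus\{0_\lambda\}$ and observing that the singularity of $\bar u_\lambda$ at the reflected origin has the favourable sign, or by switching to the moving-spheres formulation (which compares $u$ directly with its sphere inversions and avoids introducing an artificial singularity). Your parenthetical remark acknowledges this, but in a rigorous proof it is the step where most of the technical work lies. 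The alternative Pohozaev route you sketch is also correct in spirit, with the same caveat: without a priori decay for $u$, the boundary terms on $\partial B_R^+$ do not obviously vanish, and one again falls back on the Kelvin/moving-planes analysis to supply that decay.
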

}

{ Consider now the half space
$$
\mathbb{R}^{N}_{+}=\{z=(x',x_N)| x'\in\mathbb{R}^{N-1},x_N>0\},
$$
then the next non existence result is proved in \cite{MMF}.}

{
\begin{Theorem}\label{blo2}
Let  $0<s<1$ and $N>2s$. Suppose that $q<\frac{N-1+2s}{N-1-2s}$, then the problem
\begin{equation*}
\left\{
\begin{array}{rcll}
(-\D)^s u &= & u^q & \text{ in }\mathbb{R}^{N}_{+}, \\
u & = & 0 &\text { in }\mathbb{R}^{N}\backslash \mathbb{R}^{N}_{+},
\end{array}%
\right.
\end{equation*}
has no positive bounded solution.
\end{Theorem}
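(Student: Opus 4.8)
The plan is to argue by contradiction: assume $u$ is a bounded positive solution of the half-space problem and derive a contradiction in two stages, first proving that $u$ is monotone in the $x_N$-direction by the method of moving planes, then performing a dimension reduction to $\mathbb{R}^{N-1}$ where the already available Liouville theorem (Theorem \ref{blo}) applies. Note that for the bound $q<\frac{N-1+2s}{N-1-2s}$ to be meaningful we are implicitly in the regime $N-1>2s$, so Theorem \ref{blo} can indeed be invoked in dimension $N-1$.

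\emph{First stage: monotonicity in $x_N$.} Since $(-\Delta)^s u=u^q$ with $u$ bounded and $u\equiv 0$ outside $\mathbb{R}^N_+$, interior and boundary regularity for $(-\Delta)^s$ give $u\in C^{2s+\alpha}_{loc}$ and $u(x)\le C\,x_N^s$ near $\partial\mathbb{R}^N_+$. For $\lambda>0$ set $\Sigma_\lambda=\{0<x_N<\lambda\}$, $x^\lambda=(x',2\lambda-x_N)$ and $w_\lambda(x)=u(x^\lambda)-u(x)$, extended antisymmetrically across $T_\lambda=\{x_N=\lambda\}$. By the mean value theorem, $w_\lambda$ satisfies a linear nonlocal equation $(-\Delta)^s w_\lambda=c_\lambda(x)w_\lambda$ in $\Sigma_\lambda$ with $c_\lambda=q\theta^{q-1}\ge 0$ bounded (here $u>0$ is used), while $w_\lambda\ge 0$ on $\{x_N=0\}$. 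The maximum principle for antisymmetric functions in thin slabs (the direct method of moving planes for the fractional Laplacian; no decay of $u$ in $x'$ is needed because the slab is narrow and $c_\lambda$ is bounded) yields $w_\lambda\ge 0$ in $\Sigma_\lambda$ for all small $\lambda$. Let $\lambda_0$ be the supremum of those $\lambda$ for which $w_\mu\ge 0$ in $\Sigma_\mu$ for all $\mu\le\lambda$. If $\lambda_0<+\infty$, continuity gives $w_{\lambda_0}\ge 0$, the strong maximum principle gives $w_{\lambda_0}>0$ in $\Sigma_{\lambda_0}$, and applying the narrow-region principle near $\{x_N=0\}$ and near $T_{\lambda_0}$ shows $w_\lambda\ge 0$ for $\lambda$ slightly larger than $\lambda_0$, a contradiction. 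Hence $\lambda_0=+\infty$; choosing $x_N=a$ and $\lambda=(a+b)/2$ for $0<a<b$ gives $u(x',a)\le u(x',b)$, with strict monotonicity by the strong maximum principle.

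\emph{Second stage: dimension reduction.} By monotonicity and boundedness, the translates $u_k(x)=u(x',x_N+k)$ increase to a bounded limit $\bar u$; since $\bar u(x',x_N+h)=\bar u(x',x_N)$ for every $h$, in fact $\bar u(x)=v(x')$ depends only on $x'\in\mathbb{R}^{N-1}$. Uniform $C^{2s+\alpha}_{loc}$ bounds allow passing to the limit in the equation: $u_k$ solves $(-\Delta)^s u_k=u_k^q$ in $\{x_N>-k\}$ with $u_k\equiv 0$ below, and the contribution of the receding set $\{y_N\le -k\}$ to the nonlocal operator tends to $0$ as $k\to\infty$, so $(-\Delta)^s_{\mathbb{R}^N}\bar u=\bar u^q$ in all of $\mathbb{R}^N$. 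Integrating the kernel $|x-y|^{-N-2s}$ in the variable $y_N$ shows that for a function depending only on $x'$ one has $(-\Delta)^s_{\mathbb{R}^N}\bar u(x)=\kappa\,(-\Delta)^s_{\mathbb{R}^{N-1}}v(x')$ for an explicit constant $\kappa>0$; after rescaling $v$ we conclude that $v\ge 0$ is a bounded, hence locally bounded, solution of $(-\Delta)^s w=w^q$ in $\mathbb{R}^{N-1}$. If $v\not\equiv 0$, the strong maximum principle gives $v>0$ in $\mathbb{R}^{N-1}$, contradicting Theorem \ref{blo} in dimension $N-1$, which applies because $q<\frac{(N-1)+2s}{(N-1)-2s}$ is exactly the subcritical condition there. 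If $v\equiv 0$, then monotonicity forces $0<u(x',x_N)\le\lim_{t\to\infty}u(x',t)=v(x')=0$, again a contradiction. Therefore no bounded positive solution exists.

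I expect the main obstacle to be the first stage: carrying out the method of moving planes for the nonlocal operator on the \emph{unbounded} half-space. One must set up the antisymmetric extension correctly, establish the maximum principle for antisymmetric functions in unbounded thin slabs (where no tangential decay of $u$ is available), and rule out that the plane gets stuck, i.e. show $\lambda_0=+\infty$, which requires combining the narrow-region principle at both faces of the slab with the strong maximum principle. The boundary estimate $u\lesssim x_N^s$ and the local Schauder theory for $(-\Delta)^s$ are the inputs that make these steps work. By contrast, the dimension-reduction step parallels the local case, the only genuinely nonlocal ingredient being the kernel computation identifying $(-\Delta)^s_{\mathbb{R}^N}$ on $x_N$-independent functions with a multiple of $(-\Delta)^s_{\mathbb{R}^{N-1}}$.
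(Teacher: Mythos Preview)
The paper does not prove Theorem~\ref{blo2} at all: it is quoted from \cite{MMF} (Fall--Weth), and the paper only uses it as a black box in the blow-up analysis of Section~\ref{sec3}. So there is no ``paper's own proof'' to compare against.

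That said, your outline is essentially the strategy of \cite{MMF}: prove monotonicity in the $x_N$-direction via the moving planes method for the fractional Laplacian (with the antisymmetric maximum principle in narrow regions as the key tool), then pass to the $x_N\to\infty$ limit to obtain a bounded positive solution in $\mathbb{R}^{N-1}$, and conclude by the whole-space Liouville theorem (Theorem~\ref{blo}) in dimension $N-1$, where the hypothesis $q<\frac{N-1+2s}{N-1-2s}$ is precisely the subcritical exponent. Your identification of the main difficulty---making the moving planes work on the unbounded half-space without any decay in the tangential variables---is accurate; this is exactly where the technical work in \cite{MMF} lies, and it is handled there via a narrow-region antisymmetric maximum principle that does not require decay at infinity. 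The dimension-reduction step and the kernel identity $(-\Delta)^s_{\mathbb{R}^N}\bar u=\kappa\,(-\Delta)^s_{\mathbb{R}^{N-1}}v$ for $x_N$-independent functions are standard. In short, your proposal is a faithful sketch of the argument in the cited reference, not an alternative route.
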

It is clear that if $q<2^*-1$, then $q<\frac{N-1+2s}{N-1-2s}$.}

\

\begin{remarks}\label{rem11}
Motivated by the work of Caffarelli and Silvestre \cite{CS1}, several authors have considered the spectral fractional Laplacian operator in a bounded domain with zero Dirichlet boundary data by means of an auxiliary variable.

More precisely, let us begin by introducing the following space
\begin{equation}\label{space}
\widetilde{H^s}(\O)=\{u=\Sigma_{i=1}^\infty a_i \phi_i \in L^{2}(\O)\text{  with } \Sigma_{i=1}^{\infty}\l^s_{i} a^{2}_i<\infty \},
\end{equation}
were $(\l_{i},\phi_i)$ are the eigenvalues and the eigenfunctions of $(-\D)$ with Dirichlet boundary conditions. $\widetilde{H^s}(\O)$ is endowed with the norm
$$
||u||^2_{\widetilde{H^s}(\O)}=\Sigma_{i=1}^\infty \l^s_{i} a^{2}_i.
$$
The space defined in \eqref{space} is the interpolation space $(H^1_{0}(\O); L^{2}(\O))_{[1-s]}$ see \cite{Adams,BRS, LM}.
Therefore we obtain that
$$\widetilde{H^s}(\O)=\left\{
\begin{array}{rcll}
H^{s}(\Omega) & \text{if } & 0<s<\frac 12 , \\
H^{\frac 12}_{00}(\Omega)& \text{if }&   s=\frac 12 , \\
H^{s}_0(\Omega)& \text{if } &  \frac 12<s<1,%
\end{array}%
\right.$$
where
$$
H^{\frac 12}_{00}(\Omega)=\left\{u\in H^{\frac 12}(\Omega)\mbox{  such that }\io \dfrac{u^2}{d(x)}<\infty \right\},
$$
we refer to \cite{LM} for more details about these spaces.

\

As consequence we define the spectral fractional Laplacian $\mathcal{A}^s$ by setting
\begin{equation}\label{space1}
\mathcal{A}^s(u)=\Sigma_{i=1}^\infty \l^s_{i} a_i \phi_i.
\end{equation}
From \cite{CS1}, the problem
\begin{equation}\label{spect1}
\left\{
\begin{array}{rcll}
\mathcal{A}^s u & = & f(x,u) &
\text{in }\Omega , \\
u & = & 0 & \text{on } \p \Omega ,%
\end{array}%
\right.
\end{equation}%
can be formulate in a local setting.

More precisely, let $C_{\O}=\O\times (0,\infty)\subset {\mathbb{R}}_{+}^{N+1}$, then a point in $C_{\O}$ will be denoted by $(x,y)$. If $s\in (\frac 12, 1)$, for $u\in H^{s}_{0}(\O) $, we define the $s$-harmonic extension $w=E_{s}(u)$ in $C_{\O}$ as the solution to the problem
\begin{equation}\label{PH2}
\left\{
\begin{array}{cc}
-\texttt{div} (y^{1-2s} \nabla w)=0 & \text{in } C_{\O} , \\
w=0 & \text{in }\p_L\O , \\
w= u & \text{on }\O\times \{0\},%
\end{array}%
\right.
\end{equation}%
with $\p_L\O=\p\O\times(0,\infty)$. It is obvious that $w$ belongs to the space
$$
X^{s}_{0}(C_\O)=\overline{\mathcal{C}^{\infty}_{0}(C_\O)}^{||.||_{X^{s}_{0}(C_\O)}},     \hbox{        with    }  ||w||_{X^{s}_{0}(C_{\O})}=\left(\int_{C_{\O}}y^{1-2s}|\n w|^2dxdy\right)^{\frac{1}{2}},
$$
where $k_s$ is a normalization constant. As consequence, for all $u\in H^{s}_{0}(\O)$, we have
\begin{equation}\label{norm}
||E_{s}(u)||_{X^{s}_{0}(C_{\O})}=||u||_{H^{s}_{0}(\O)}.
\end{equation}
Now, going back to problem \eqref{PH2}, we get
\begin{equation}\label{EXT}
\dfrac{\p w(x,y)}{\p \nu^s}\equiv -\dfrac{1}{k_s}\lim_{y\rightarrow 0^{+}}\dfrac{\p w(x,y)}{\p y}=\mathcal{A}^s u(x).
\end{equation}
Now, form \cite{CCPS}, we can prove that the non existence result in Theorem \ref{blo} is equivalent to the next one.
\begin{Theorem}\label{blow1}
Assume that $\frac 12\leq s<1$ and $N>2s$. Then the problem
\begin{equation*}
\left\{
\begin{array}{rcll}
-{\it div} (y^{1-2s} \n v) & = & 0 & \text{in } \mathbb{R}^{N+1}_{+}\equiv \ren\times (0,\infty), \\
\dfrac{\p v}{\p \nu^s} &= & Cv^{q}(x,0) & \text{on }\p\mathbb{R}^{N+1}_{+}=\ren,%
\end{array}%
\right.
\end{equation*}%
has no bounded positive solution provided that $q<2^*_s-1$.
\end{Theorem}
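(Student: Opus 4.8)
The plan is to transfer the nonexistence result of Theorem \ref{blo} through the Caffarelli--Silvestre extension, exactly as announced. First I would observe that if $v$ were a bounded positive solution to the degenerate equation on $\mathbb{R}^{N+1}_+$ with the prescribed Neumann-type condition $\tfrac{\p v}{\p\nu^s}=Cv^q(x,0)$ on $\ren$, then its trace $u(x)=v(x,0)$ is a positive bounded function on $\ren$. The key point (this is the content of the equivalence noted in \cite{CCPS}) is that $v$ is then automatically the $s$-harmonic extension of $u$ to the whole half-space: since $v$ is bounded, $\n v$ has the right integrability against the weight $y^{1-2s}$ on compact sets, and $v$ solves $-\Div(y^{1-2s}\n v)=0$, so by uniqueness of the extension problem in the half-space $v=E_s(u)$, and the Dirichlet-to-Neumann identity \eqref{EXT} reads $(-\Delta)^s u = \tfrac{\p v}{\p\nu^s}$ pointwise (after absorbing the constant $k_s$). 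Hence $u$ solves $(-\Delta)^s u = C u^q$ with $u>0$ on $\ren$, $u$ locally bounded.

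Second, after the harmless rescaling $u\mapsto C^{-1/(q-1)}u$ (valid because $q>1$, which holds under $q<2^*_s-1$ in the regime of interest), we reduce to $(-\Delta)^s u = u^q$, $u>0$ in $\ren$, with $u$ locally bounded. Since $q<2^*_s-1$, Theorem \ref{blo} applies verbatim and gives a contradiction. This shows no such $v$ exists, which is the claim.

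Third, the step I expect to cost the most care is the passage from "bounded solution $v$ of the extension problem" to "$v$ is genuinely the extension $E_s(u)$ of its trace, with $(-\Delta)^s u$ equal to the weighted normal derivative." One must check that the trace $u$ lies in the right space (or argue locally), that the limit defining $\tfrac{\p v}{\p\nu^s}$ exists and equals $(-\Delta)^s u(x)$ in the pointwise/distributional sense required by Theorem \ref{blo}, and that boundedness of $v$ rules out the parasitic solution $v\equiv$ (a constant times) $y^{2s}$ that the extension operator does not see — i.e. one needs the boundedness hypothesis to pin down the unique bounded harmonic extension. The cleanest way is to invoke directly the equivalence established in \cite{CCPS}, which is precisely the tool cited in the statement; alternatively one reproduces the Poisson-kernel representation for the $s$-harmonic extension on $\mathbb{R}^{N+1}_+$ and checks that a bounded $v$ must coincide with it. Once this identification is in place, the rest is immediate from Theorem \ref{blo}.
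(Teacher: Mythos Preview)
Your proposal is correct and follows exactly the route the paper indicates: the paper does not give a detailed proof but simply states that, via the equivalence from \cite{CCPS} between the extension problem and the fractional Laplacian, Theorem~\ref{blow1} is equivalent to Theorem~\ref{blo}. Your write-up is a faithful (and more detailed) unpacking of that same reduction, including the correct identification of boundedness as the hypothesis that pins down the unique $s$-harmonic extension and excludes the $y^{2s}$ branch.
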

\end{remarks}

\

\section{The sublinear case: $0< q <1$}\label{sec2}
In this section we are interested to find a positive solution to problem \eqref{PHI} for $0< q <1$, more precisely we have the following result.
\begin{Theorem}\label{tt1}
Assume that $0<q<1$ and $0<s<1$, then problem \eqref{PHI}  has a distributional positive solution in the sense of Definition \ref{distri} such that
$u(x)\ge C d^s(x)$ in $\O$.
\end{Theorem}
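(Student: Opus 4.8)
The plan is to construct the solution by a monotone approximation scheme combined with the comparison principle and the sub-solution estimate coming from Theorem~\ref{avee}. First I would freeze the nonlinearity away from the boundary: for $\e>0$ and $n\in\N$ consider the truncated problems
\begin{equation*}
\left\{
\begin{array}{rcll}
(-\D)^{s} u_{\e,n} &= & \dfrac{(u_{\e,n}+\e)^{q}}{(d(x)+\frac1n)^{2s}} & \text{ in }\Omega , \\
u_{\e,n} & = & 0 & \text{ in }\ren\backslash \Omega .
\end{array}
\right.
\end{equation*}
Since the right-hand side is bounded and nondecreasing in $u_{\e,n}$, existence and uniqueness of a positive solution $u_{\e,n}\in H^s_0(\O)\cap L^\infty(\O)$ follows from a standard sub/super-solution or monotone iteration argument (using Theorem~\ref{entropi} at each iteration step and Lemma~\ref{compar} with $f(x,t)=\frac{(t+\e)^q}{(d+1/n)^{2s}}$, which is admissible since $q<1$ makes $t\mapsto f(x,t)/t$ decreasing). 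By the comparison principle the family is monotone: $u_{\e,n}$ is nondecreasing in $n$ and nonincreasing in $\e$.

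The key step is the uniform $L^1_{loc}$ bound (and in fact a bound from above by $C d^s$), which I would obtain by building an explicit super-solution. Here is where Theorem~\ref{avee} enters: take $\phi$ the solution of $(-\D)^s\phi = d^{-\beta}$ with a suitable $\beta\in(0,s+1)$; since $q<1$ one expects the solution to behave like $d^s$, so one looks for a super-solution of the form $M\phi$ with $\phi\simeq d^s$ (case $\beta<s$ of Theorem~\ref{avee}). Plugging $M d^s$ in, the left side is $\simeq M d^{-\beta}$ while the right side is $\simeq (M d^s)^q d^{-2s} = M^q d^{sq-2s}$; choosing $\beta$ with $sq-2s \le -\beta < 0$, i.e. $\beta = 2s-sq = s(2-q)\in(0,s+1)$ when $s(2-q)<s+1$, one can absorb the constant for $M$ large and get $(-\D)^s(M\phi)\ge \frac{(M\phi)^q}{d^{2s}}$, hence $u_{\e,n}\le M\phi\le C d^s$ uniformly. (The range of $\beta$ forces a mild restriction, and on the part of the range where $\beta\ge s$ one instead gets $\phi\simeq d^s\log(D/d)$ or $\phi\simeq d^{2s-\beta}$; in all cases one still controls $u_{\e,n}$ from above near $\p\O$ — this case analysis is exactly the kind of fine boundary computation the introduction warns about.) Symmetrically, a sub-solution of the form $c\,\psi$ with $\psi$ the torsion-type function solving $(-\D)^s\psi = 1$ in $\O$, $\psi\simeq d^s$, gives $u_{\e,n}\ge c\, d^s$ uniformly in $\e,n$, which will survive to the limit and yield the claimed lower bound $u\ge C d^s$.

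With the uniform bounds in hand I would pass to the limit. Monotonicity plus the pointwise bound $c\,d^s\le u_{\e,n}\le C\,d^s$ give, by monotone convergence, a limit $u = \lim_{\e\to0}\lim_{n\to\infty} u_{\e,n}$ with $c\,d^s\le u\le C\,d^s$ in $\O$. The uniform estimate also controls the right-hand sides: $\frac{(u_{\e,n}+\e)^q}{(d+1/n)^{2s}}\le \frac{(C d^s + 1)^q}{d^{2s}} \in L^1_{loc}(\O)$ (and one checks it is equi-integrable on compact subsets, indeed $d^{sq-2s}$ with $sq-2s>-2s>-N$ near the boundary when $N>2s$ — more care is needed globally, but for a \emph{distributional} solution testing against $\psi\in\mathcal C^\infty_0(\O)$ only local integrability is required). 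Then for any $\psi\in\mathcal C^\infty_0(\O)$ one writes the weak formulation $\io u_{\e,n}(-\D)^s\psi\,dx = \io \frac{(u_{\e,n}+\e)^q}{(d+1/n)^{2s}}\psi\,dx$ and passes to the limit on both sides by dominated convergence ($(-\D)^s\psi$ is bounded, $u_{\e,n}\uparrow u$ in $L^1(\mathrm{supp}\,\psi)$ on the left; on the right the integrand converges pointwise a.e.\ to $\frac{u^q}{d^{2s}}\psi$ and is dominated on $\mathrm{supp}\,\psi$). This yields $\io u(-\D)^s\psi = \io \frac{u^q}{d^{2s}}\psi$ for all test functions, i.e.\ $u$ is a distributional solution in the sense of Definition~\ref{distri}, with $u>0$ and $u\ge C d^s$.

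The main obstacle I anticipate is the super-solution construction and the attendant case distinction on $\beta$ in Theorem~\ref{avee}: one must verify that the target exponent $s(2-q)$ falls in the admissible range $(0,s+1)$ (which holds precisely because $q>1-1/s$ is automatic when... in fact $s(2-q)<s+1 \iff q>1-1/s$, true for all $q>0$ when $s<1$? not quite — when $s\le 1$ and $q>0$ we need $2-q<1+1/s$, i.e. $q>1-1/s$, which holds for all $q>0$ if $s\le1$; so the range is fine), and then to chase the implied boundary behavior of $\phi$ through the inequality $(-\D)^s(M\phi)\ge \frac{(M\phi)^q}{d^{2s}}$ using the two-sided estimates $\phi\simeq d^\gamma$ carefully near $\p\O$ and away from it. Everything else is routine monotone-convergence and comparison-principle bookkeeping.
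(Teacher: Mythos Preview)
Your sub-solution step has a genuine gap. The torsion function $\psi$ solving $(-\Delta)^s\psi=1$ does \emph{not} furnish a sub-solution of the approximating problems uniformly in the parameters. You would need $c\le \dfrac{(c\psi+\e)^q}{(d+1/n)^{2s}}$; for $n$ fixed and $d(x)\to 0$ the left side is the positive constant $c$, while on the right $\psi^q\simeq d^{sq}\to 0$ and $(d+1/n)^{2s}\to n^{-2s}$, so only the $\e$-contribution survives and forces $c\le C\e^q$. The lower bound then collapses as $\e\downarrow 0$. (Your monotonicity is also reversed: since $(t+\e)^q$ increases with $\e$, $u_{\e,n}$ is \emph{nondecreasing} in $\e$, so $\e\to 0$ is a decreasing limit and a uniform lower bound is genuinely needed.) The structural reason the torsion function fails is that $(-\Delta)^s(c\psi)=c$ stays positive at $\partial\Omega$ while the truncated right-hand side is bounded there. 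The remedy, and the paper's choice, is the first eigenfunction $\phi_1$: since $(-\Delta)^s(C\phi_1)=C\lambda_1\phi_1$ itself vanishes like $d^s$, the required inequality becomes $C^{1-q}\lambda_1\,\phi_1^{1-q}(d+1/n)^{2s}\le 1$, and $\phi_1^{1-q}\simeq d^{s(1-q)}$ is bounded, so a single small $C$ works for all $n$ and $\e$.

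For the upper bound your super-solution idea is salvageable but miscast: with $q<1$ one always has $\beta=s(2-q)\in(s,2s)$, so case (A) of Theorem~\ref{avee} (giving $\phi\simeq d^s$) never applies; you are in case (C) and get $\phi\simeq d^{sq}$. The paper bypasses super-solutions entirely and uses a different, shorter mechanism: a single approximation (no $\e$) and an a priori bound obtained by \emph{testing} the equation against a fixed auxiliary function, then closing with H\"older and the sublinearity $q<1$. For $s<\tfrac12$ the test function is the torsion function $\rho$, which in that range satisfies $\rho\le Cd^{2s}$, yielding $\int_\Omega u_n\le C\int_\Omega u_n^q$; for $s\ge\tfrac12$ one tests against the $\phi$ of Theorem~\ref{avee} with some $\beta\in(s,1)$, obtaining $\int_\Omega u_n d^{-\beta}\le C\int_\Omega u_n^q d^{-\beta}$. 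In both cases H\"older immediately gives the bound, and monotone convergence finishes. This duality-type argument avoids the boundary case-splitting you anticipate.
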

\begin{proof}
We divide the proof into two main cases according to the values of $s$.

\textbf{ The first case: $0<s<\frac 12$}

We proceed by approximation. Let $u_n$ be the unique positive solution to
\begin{equation}\label{PPP1}
\left\{
\begin{array}{rcll}
(-\Delta)^{s} u_n & = & \dfrac{u^q_n}{(d(x)+\frac 1n)^{2s}} & \text{ in }\Omega , \\
u_n & > & 0 & \text{in }\Omega , \\
u_n & = & 0 & \text{in }\ren \setminus\Omega.%
\end{array}%
\right.
\end{equation}
The existence of $u_n$ follows using classical variational argument, however, the uniqueness holds using the comparison principle in Proposition \ref{compar}. In the same way we reach that $u_n\le u_{n+1}$ for all $n$.

Let $\rho$ be the solution to
\begin{equation}\label{GGGG1}
\left\{
\begin{array}{rcll}
(-\Delta)^{s} \rho &= & 1 & \text{ in }\Omega , \\
\rho & = & 0 & \text{ in }\ren \setminus\Omega.%
\end{array}%
\right.
\end{equation}
From \cite {CDDS}, we know that $\rho\in C^{\a}(\overline{\O})$ where $\a\in(0,\min(2s,1))$. In particular, since $0<s<\frac 12 $, then $\rho\leq C d^{2s}$.

Using $\rho$ as a test function in \eqref{GGGG1} and taking into consideration the previous estimate on $\rho$, it holds
$$
\io u_n dx=\io \frac{u^q_n \rho}{(d(x)+\frac 1n)^{2s}}dx\leq C \io u^q_ndx.
$$
Since $q<1$, by H\"older inequality, we obtain
$$
\io u_n dx\leq C, \mbox{  for all }n.
$$
Thus we get the existence of a measurable function $u$ such that $u_n\uparrow u$ strongly in $L^1(\O)$ as $n\to \infty$. By the Dominated Convergence Theorem, we reach that
$$\dfrac{u^q_n}{(d(x)+\frac 1n)^{2s}}\uparrow \dfrac{u^q}{d^{2s}}\quad \hbox {strongly in }\quad  L^{1}_{loc}(\O).$$
Hence $u$ is, at least, a distributional solution to problem \eqref{PHI}.

Assume that $q<1-2s$, using H\"older inequality we can prove that $\dfrac{u^q}{d^{2s}}\in L^1(\O)$. Thus $u$ is an entropy solution to \eqref{PHI} and then $u\in W^{s,\s}_0(\Omega)$ for all $\s<\frac{N}{N-2s-1}$, see \cite{LPPS} and \cite{AAB}.

{{Let us}} prove now that $u\ge C d^s$. Denote by $\phi_1$ the first positive eigenfunction of the fractional operator, then we know that $\phi_1\backsimeq d^s$. It is not difficult to see that $C\phi_1$ is a subsolution to problem \eqref{PPP1} where $C>0$ can be chosen independently of $n$. Hence by the comparison principle in Lemma \ref{compar}, we conclude that $u_n\ge C\phi_1$. Passing to the limit as $n\to \infty$, we get the desired estimate.

\

\textbf{The second case: $\frac 12\leq s<1$}.
\
Let $\phi$ be the unique solution to problem \eqref{ave1} with $\frac{1}{2}\leq s <\b<1$. Choosing $\phi$ as a test function in \eqref{PPP1} and
taking into consideration that $\phi\backsimeq d^{2s-\beta}$, we reach that
$$
\io \dfrac{u_n}{d^{\b}(x)} dx=\io \frac{u^q_n \phi}{(d(x)+\frac 1n)^{2s}}dx\leq C \io\frac{ u^q_n}{d^{\b}}dx.
$$
Since $\beta<1$, by H\"older inequality, we obtain
$$
\io \dfrac{u_n}{d^{\b}(x)} dx\leq \left(\io \dfrac{u_n}{d^{\b}(x)} dx\right)^{q}\left(\io\frac{1}{ d^\b}dx\right)^{1-q},
$$
Hence
$$
\io \frac{ u_n}{d^{\b}}dx\leq  C.
$$
As consequence, we get the existence of a measurable function $u$ such that $u_n \uparrow u$ in $L^1(\O)$ and $\dfrac{u_n}{(d(x)+\frac 1n)^{\beta}}\uparrow \dfrac{u}{d^{\beta}}$ strongly in $L^{1}(\O).$ It is clear that $u$ solves problem \eqref{PHI}, at least, in the distributional sense.

Notice that, since $u_n=0$ in the set $\ren\backslash \O$, then, in any case, it holds that $u=0$ a.e. in $\ren\backslash \O$.
\end{proof}

\begin{remarks}
In the local case, $s=1$, the authors in \cite{ABDM} proved a strong non existence result to problem \eqref{PHI} for all $q<1$ and as a consequence they get a complete blow-up for the approximating problems. Hence our existence result in Theorem \ref{tt1} exhibits a significative difference between the local and the non local case.
\end{remarks}

\section{The superlinear case: $1<q \le 2^{*}_{s}-1$}\label{sec3}
In this section we are interested to find a positive solution to problem \eqref{PHI} in the superlinear case $1< q \le 2^{*}_{s}-1$.
According to the values of $q$, we will consider two main cases: the subcritical case $q<2^*_{s}-1$ and the critical case $q=2^*_{s}-1$.

\subsection{The Subcritical case: $1<q<2^*_{s}-1$.}\label{ssec00}
The main existence result of this case is the following.
\begin{Theorem}\label{tt2}
Assume that $\frac{1}{2}\leq s<1$ and $1< q <2^{*}_{s}-1$, then problem \eqref{PHI} has a bounded positive solution  $u\in H^{s}_0(\O)\cap L^{\iy}(\O)$.
\end{Theorem}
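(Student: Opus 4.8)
The plan is to obtain a solution of \eqref{PHI} in the subcritical superlinear range as a limit of solutions of approximating truncated problems, and to control this limiting procedure by a priori estimates proved via a blow-up / rescaling argument combined with the Liouville theorems stated above. First I would set up the approximating problems
\begin{equation*}
\left\{
\begin{array}{rcll}
(-\Delta)^s u_n &=& \dfrac{(u_n^+)^q}{(d(x)+\frac1n)^{2s}} & \text{in }\Omega,\\
u_n &=& 0 & \text{in }\ren\setminus\Omega,
\end{array}
\right.
\end{equation*}
whose solvability I would get by a variational (mountain pass) argument, since for $1<q<2^*_s-1$ the associated functional on $H^s_0(\Omega)$ has the mountain pass geometry and satisfies Palais--Smale once the weight $d^{-2s}$ is replaced by the bounded weight $(d+\frac1n)^{-2s}$; positivity of $u_n$ follows from testing against $u_n^-$ and the maximum principle, and a lower bound $u_n\ge C\phi_1\simeq Cd^s$ follows as in Theorem \ref{tt1} by comparison with a small multiple of the first eigenfunction (here using $q>1$ so that $C\phi_1$ is a subsolution for $C$ small, uniformly in $n$).

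The core of the argument is the uniform bound $\|u_n\|_{L^\infty(\Omega)}\le C$ independent of $n$. I would argue by contradiction: suppose $M_n:=\|u_n\|_{L^\infty}\to\infty$, attained (or almost attained) at $x_n\in\Omega$, and rescale
$$
v_n(z)=\frac{1}{M_n}\,u_n\!\left(x_n+\mu_n z\right),\qquad \mu_n:=M_n^{-\frac{q-1}{2s}},
$$
so that $v_n(0)\simeq 1$, $0\le v_n\le 1+o(1)$, and $v_n$ solves $(-\Delta)^s v_n = \dfrac{\mu_n^{2s}}{(d(x_n+\mu_n z)+\frac1n)^{2s}}\,(v_n^+)^q$ in the rescaled domain. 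Two regimes appear depending on whether $\mathrm{dist}(x_n,\partial\Omega)/\mu_n$ stays bounded or tends to infinity. In the interior regime the weight $\mu_n^{2s}(d(x_n+\mu_n z))^{-2s}=(d(x_n+\mu_n z)/\mu_n)^{-2s}\to 0$ locally (since $d(x_n)/\mu_n\to\infty$), so by the regularity theory for \eqref{proOO} and stability of weak solutions, $v_n$ converges locally uniformly to a nonnegative bounded $v$ solving $(-\Delta)^s v=0$ in $\ren$ with $v(0)>0$ — actually one has to be a little careful because the right-hand side may converge to a positive limit if $d(x_n)/\mu_n\to\ell<\infty$, in which case the limit solves $(-\Delta)^s v=c\,v^q$ in $\ren$ for some $c>0$, contradicting Theorem \ref{blo}; and if $d(x_n)/\mu_n\to\infty$ the limit is a bounded harmonic function, hence constant, which is incompatible with it being a nontrivial solution of the limiting equation / with the decay forced at infinity. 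In the boundary regime, after flattening $\partial\Omega$ near $x_n$ (here the $\mathcal C^{1,1}$ regularity and the known boundary behavior $u_n\simeq d^s$ from Theorem \ref{avee} are used to justify the blow-up limit keeps the zero exterior datum), $v_n$ converges to a bounded positive solution of $(-\Delta)^s v=c\,v^q$ in a half-space $\mathbb{R}^N_+$ with $v=0$ outside, contradicting Theorem \ref{blo2} because $q<2^*_s-1<\frac{N-1+2s}{N-1-2s}$. Either way we reach a contradiction, so $M_n$ is bounded.

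With $\|u_n\|_{L^\infty}\le C$ in hand, testing the equation for $u_n$ against $u_n$ and using the Hardy inequality \eqref{Ter_Hardy} (this is where $s\ge\frac12$ enters) gives a uniform bound on $\|u_n\|_{H^s_0(\Omega)}$: indeed $\|u_n\|^2_{H^s_0(\Omega)}=\int_\Omega \frac{u_n^{q+1}}{(d+\frac1n)^{2s}}\le C^{q-1}\int_\Omega\frac{u_n^2}{d^{2s}}\le \frac{C^{q-1}}{C(\Omega,N,s)}\|u_n\|^2_{H^s_0(\Omega)}$ — which only works directly if $C^{q-1}$ is small, so instead I would use the $L^\infty$ bound more crudely together with $\int_\Omega d^{-2s}u_n^{q+1}\le C^{q+1}\int_\Omega d^{-2s}\phi_1\le C'$ after one more comparison, or simply note $u_n\le C$ and $u_n\ge C'd^s$ give $u_n^q d^{-2s}\le C'' d^{s q-2s}$ which is in $L^1$ and, by Theorem \ref{avee} with $\beta=2s-sq\in(0,s)$ when... — in any case one extracts $u_n\rightharpoonup u$ weakly in $H^s_0(\Omega)$, and by the monotone/dominated convergence theorem and elliptic regularity the limit $u\in H^s_0(\Omega)\cap L^\infty(\Omega)$ is a bounded positive energy solution of \eqref{PHI}, with $u\ge Cd^s>0$. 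The main obstacle is clearly the a priori bound: one must carefully match the rescaling exponent $\mu_n$ to the singular weight $d^{-2s}$ so that the rescaled weight either vanishes or converges to a constant, control the location $x_n$ relative to the boundary, and invoke the correct Liouville theorem (Theorem \ref{blo} or \ref{blo2}, equivalently Theorem \ref{blow1}) in each regime; the boundary regime in particular requires the precise boundary estimate $u_n\simeq d^s$ so that the blow-up profile inherits the Dirichlet exterior condition.
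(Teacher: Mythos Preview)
Your overall strategy---approximate, get a priori $L^\infty$ bounds by blow-up and Liouville, pass to the limit---is exactly the paper's. But two steps in your execution break down.

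\textbf{The lower bound and nondegeneracy.} You claim $u_n\ge C\phi_1$ by comparing with a subsolution and invoking Lemma~\ref{compar}. That lemma requires $t\mapsto f(x,t)/t$ to be decreasing, and for $f(x,t)=t^q/d^{2s}$ with $q>1$ this ratio is \emph{increasing}; there is no sub/supersolution comparison available in the superlinear regime. Consequently your route to $u\not\equiv 0$ collapses. The paper handles nondegeneracy differently: first it shows $\|u_n\|_{L^\infty}\ge \overline C>0$ by testing the equation with $u_n$ and using the Hardy inequality \eqref{Ter_Hardy} (if $\|u_n\|_{L^\infty}^{q-1}$ were below the Hardy constant one gets a contradiction); then, by a second blow-up argument, it proves that the maximum points satisfy $d(x_n)\ge C_1>0$; finally, assuming $u\equiv 0$, it uses the test function $u_n(\phi_1+\tfrac{c}{n})$ to show $\int_\Omega|(-\Delta)^{s/2}u_n|^2\phi_1\to 0$, hence $u_n\to 0$ in $\mathcal C^\gamma_{loc}(\Omega)$, which contradicts $u_n(x_n)\ge\overline C$ at points staying in a fixed compact set. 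You are missing this whole mechanism.

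\textbf{The $H^s_0$ bound.} Your attempts (Hardy with a small constant, or $u_n^q d^{-2s}\le C''d^{sq-2s}$, etc.) are either circular or rely on an unproved upper bound $u_n\lesssim d^s$. The paper avoids all of this: since each $u_n$ is the mountain pass solution, the critical value satisfies
\[
c_n=\tfrac{(q-1)}{2(q+1)}\,\tfrac{a_{N,s}}{2}\iint_{D_\Omega}\frac{|u_n(x)-u_n(y)|^2}{|x-y|^{N+2s}}\,dxdy,
\]
and $c_n\le \max_{t\ge 0}J_n(tv_1)$ is bounded uniformly in $n$ for a fixed $v_1$ with $J_n(v_1)<0$ for all $n$. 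This gives $\|u_n\|_{H^s_0}\le C$ immediately.

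\textbf{A computational slip in the blow-up.} With $\mu_n=M_n^{(1-q)/(2s)}$ the rescaled equation is $(-\Delta)^s v_n=\dfrac{v_n^q}{(d(x_n+\mu_n z)+\frac1n)^{2s}}$; there is no residual factor $\mu_n^{2s}$. Hence in the interior case $x_n\to\bar x\in\Omega$ the weight converges to the positive constant $d(\bar x)^{-2s}$, and the limit problem is $(-\Delta)^s v=Cv^q$ in $\mathbb{R}^N$, contradicting Theorem~\ref{blo}. Your ``weight $\to 0$, limit $s$-harmonic, hence constant'' scenario never occurs, and there is no decay at infinity to invoke. For the boundary case $\bar x\in\partial\Omega$ the paper changes the scaling to $\mu_n=M_n^{(1-q)/(2s)}(d(x_n)+\frac1n)$, so that the weight becomes $\big(\tfrac{d(x_n)+1/n}{d(\mu_n z+x_n)+1/n}\big)^{2s}\to 1$ and the limit lands in Theorem~\ref{blo} or Theorem~\ref{blo2}.
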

\begin{proof} We proceed by approximation. Let $u_n\in H^{s}_0(\O)\cap L^{\iy}(\O)$ be the \emph{mountain pass solution } to the approximated problem
\begin{equation}\label{PPP111}
\left\{
\begin{array}{rcll}
(-\Delta)^{s} u_n & = & \dfrac{u^q_n}{(d(x)+\frac 1n)^{2s}} & \text{ in }\Omega , \\
u_n & > & 0 & \text{in }\Omega , \\
u_n & = & 0 & \text{in }\ren \setminus\Omega.%
\end{array}%
\right.
\end{equation}
Then $u_n$ is a critical point of the functional
$$
J_n(v)=\dfrac{a_{N,s}}{2}\iint_{D_{\O}}
\dfrac{|v(x)-v(y)|^{2}}{|x-y|^{N+2s}}\,dxdy-\frac{1}{q+1}\io
\frac{|v(x)|^{q+1}}{(d(x)+\frac 1n)^{2s}}dx.
$$
It is clear that $J_n(u_n)=c_n$, the mountain pass energy level defined by
$$
c_n=\inf_{\g\in \G}\max_{t\in [0,1]}J(\g(t))
$$
where $$\G=\{\g\in \mathcal{C}([0,1],  H^{s}_0(\O))\mbox{  with  }\g(0)=0\mbox{  and  }\g(1)=v_1\in H^{s}_0(\O), J_n(v_1)<0\},$$
with $v_1\in H^s_0(\O)$ is chosen such that
$$J_n(v_1)<<0 \quad \hbox{uniformly in} \quad n.$$
Since
$$
c_n=\dfrac{C(N,s)(q-1)}{2(q+1)}\iint_{D_{\O}}
\dfrac{|u_n(x)-u_n(y)|^{2}}{|x-y|^{N+2s}}\,dxdy,$$
using the fact that
$$0\le c_n\le \max_{t\in [0,\infty)}J_n(tv_1)\le C     \hbox{   for all   } n\in\ene ,$$  we reach that $\{u_n\}_n$ is bounded in $H^{s}_0(\O)$.

\

We claim that
\begin{equation}\label{estim}
||u_n||_{\infty}\le C\qquad \hbox{ for all }\, n.
\end{equation}
Assume by contradiction that there exists a sequence $\{u_n\}\subset H^{s}_0(\O)$ of solutions to \eqref{PPP111} such that $||u_n||_{\infty}=M_n \rightarrow \infty$ as $n \rightarrow \infty$. Let $x_n\in \O$ be such that $u_n(x_n)=M_n$. Since $\{x_n\}_n \subset\O$, a bounded set, we get the existence of $\overline{x}\in \overline{\O}$ such that, up to a subsequence, $x_n\to \overline{x}$.

\textbf{(1) First case : $\overline{x}\in\O$.} We consider the scaled function
$$
v_n(x)=\dfrac{u_n(\mu_n x+x_n)}{M_n}  \hbox{      for    } x_n\in\O,
$$
where $\mu_n=M_n^{\frac{1-q}{2s}}$. Then $v_n$ solves
\begin{equation}\label{PHS0}
\left\{
\begin{array}{rcll}
(-\Delta)^{s} v_n & = & \dfrac{v^q_n}{(d(\mu_n x+x_n)+\frac 1n)^{2s}} & \text{ in }\Omega_n , \\
v_n & > & 0 & \text{in }\Omega_n , \\
v_n & = & 0 & \text{in }\ren \setminus\Omega_n,%
\end{array}%
\right.
\end{equation}
and $\O_n=\dfrac{1}{\mu_n}(\Om -x_n)$.
Clearly, for $x$ fixed, $d(\mu_n x+x_n)+\frac 1n\to d(\overline{x})=C$ as $n\to \infty$.

As in \cite{MMF} (see also \cite{CS}), we can prove that $v_n\in C^{0,\g}$ and $||v_n||_{C^{0,\g}}\leq C$ for some $0<\g<1$.
Passing to the limit as $n\to \infty$, we get the existence of $v\in \mathcal{C}^{0,\g}(\mathbb{R}^{N})\cap L^\infty(\mathbb{R}^{N})$ such that $v(x)\le v(0)=1$ and $v>0$ solves
\begin{equation}\label{lio111}
\left\{
\begin{array}{cc}
(-\D)^s v=Cv^q  & \text{in } \mathbb{R}^{N} , \\
v>0 & \text{in }\mathbb{R}^{N}.%
\end{array}%
\right.
\end{equation}%
Since $q<2^*-1$, we get a contradiction with the non existence result of Theorem \ref{blo}.

\textbf{(2) Second case : $\overline{x}\in\p\O$.}
In this case we set $\mu_n=M_n^{\frac{1-q}{2s}}(d(x_n)+\frac 1n)$, then $v_n$ solves

\begin{equation}\label{PHS1}
\left\{
\begin{array}{rcll}
(-\Delta)^{s} v_n & = & \Big(\dfrac{d(x_n)+\frac 1n}{d(\mu_n x+x_n)+\frac 1n}\Big)^{2s}v_n^{q} & \text{ in }\Omega_n , \\
v_n & > & 0 & \text{ in }\Omega_n , \\
v_n & = & 0 & \text{ in }\ren \setminus\Omega_n.
\end{array}%
\right.
\end{equation}
For $x\in \ren$ fixed, we have $\dfrac{d(x_n)+\frac 1n}{d(\mu_n
x+x_n)+\frac 1n}\to 1$, as $n\to \infty$. Thus, as above, passing to the limit
as $n\to \infty$, we get the existence of $v$ such that either,  $v\in \mathcal{C}^{0,\g}(\mathbb{R}^{N})\cap L^\infty(\mathbb{R}^{N})$ with $v(x))\le v(0)=1$ and $v>0$, solves the problem \eqref{lio111}, or, $v\in \mathcal{C}^{0,\g}(\mathbb{R}^{N}_{+})\cap L^\infty(\mathbb{R}^{N}_{+})$ where $\g\in(0,1)$, $v(x)\le
v(0)=1$ and $v$ solves
\begin{equation*}
\left\{
\begin{array}{rcll}
(-\Delta)^{s} v & = & Cv^{q} & \text{in } \mathbb{R}^{N}_{+} , \\
 v & = & 0 & \text{in }\ren \backslash\mathbb{R}^{N}_{+}.%
\end{array}%
\right.
\end{equation*}%
Since $q<2^*-1$, we reach a contradiction with the non existence results of Theorems \ref{blo} and \ref{blo2}. Hence the claim follows.

\
\

Let us prove now that the sequence $\{u_n\}_n$ is bounded from below, namely that
\begin{equation}\label{gg1}||u_n||_{L^\infty(\O)}\ge \overline{C}>0 \mbox{  for all }n.
\end{equation}
If estimate \eqref{gg1} is false, then we get the existence of a subsequence of $\{u_n\}_n$ denoted also by $\{u_n\}_n$ such that $||u_n||_{L^\infty(\O)}\to 0$ as $n\to \infty$. Hence
$$
  \dfrac{a_{N,s}}{2}\iint_{D_{\O}}
\dfrac{|u_n(x)-u_n(y)|^{2}}{|x-y|^{N+2s}}\,dxdy =\io
\frac{u_n^{q}(x)}{(d(x)+\frac 1n)^{2s}}dx\leq ||u_n||_{L^\infty(\O)}^{q-1} \io \frac{u_n^2(x)}{(d(x)+\frac 1n)^{2s}}dx.
$$
Taking  $n$ large, we obtain that
$||u_n||^{q-1}_{L^\infty(\O)}<<C(\O,N,s)$, the optimal constant in the Hardy inequality stated in \eqref{Ter_Hardy}. Hence we reach a contradiction. Therefore we conclude that
$||u_n||_{L^\infty(\O)}\ge \overline{C}$ for all $n$.

\

Recall that $u_n(x_n)=||u_n||_{L^\infty(\O)}$. We claim that
$$d(x_n)\equiv \text{dist}(x_n,\p\O)>C_1>0,  \quad \hbox{for all} \quad n.$$ We proceed  by contradiction. Assume the existence of a subsequence, $x_n\to \overline{x}\in \p\O$ with $||u_n||_{L^\infty}=u_n(x_n)\to C_2\ge \overline{C}$ as $n\to \infty$. Then as above, we set $$v_n(x,y)=\dfrac{u_n(\mu_nx+x_n)}{M_n}$$ where
$$\mu_n=M_n^{\frac{1-q}{2s}}\Big(d^{2s}(x_n)+\frac 1n\Big)^{\frac {1}{2s}}.$$
Thus, we obtain that  $\mu_n\to 0$ as $n\to \infty$. Following the same Blow-up analysis as above, we reach that $v_n\to v$ strongly in $\mathcal{C}^{0,\g}(\mathbb{R}^{N}_{+})\cap L^\infty(\mathbb{R}^{N}_{+})$  where $v$ solves
\begin{equation*}
\left\{
\begin{array}{rcll}
(-\Delta)^{s} v & = & Cv^{q} & \text{in } \mathbb{R}^{N}_{+} , \\
 v & = & 0 & \text{in }\ren \backslash \mathbb{R}^{N}_{+},
\end{array}%
\right.
\end{equation*}%
which is a contradiction with Theorem \ref{blo2}. Hence the claim follows.
\
\

\
Therefore we conclude that $\{u_n\}_n$ is bounded in $H^{s}_0(\O)\cap L^{\iy}(\O)$. Hence there exists $u\in H^{s}_0(\O)\cap L^{\iy}(\O)$ such that, up to a subsequence, $$u_n\rightharpoonup u \hbox{  weakly in } \,H^{s}_0(\O)\qquad \hbox{and}\quad u_n\to u \hbox{  strongly in } \,L^p(\O) \mbox{  for all }p\ge 1.$$

It is not difficult to show that $u$ solves the problem \eqref{PHI}. To finish, we have just to prove that $u\not\equiv 0$. Assume by contradiction that $u\equiv 0$, then $u_n\to 0$ strongly in $L^p(\O)$ for all $p\ge 1$. We claim that
$$\int_{\O}|(\Delta)^{\frac{s}{2}} u_n|^{2}\phi_1 \to 0\qquad \hbox{as}\quad n\to \infty,$$ where $\phi_1$ is the first eigenfunction to
\begin{equation}\label{ave100}
\left\{
\begin{array}{rcll}
(-\D)^{s} \phi_1 &= & \l_1\phi_1 & \text{ in }\Omega , \\
\phi_1 & = & 0 & \text{  in }\ren\backslash \Omega , \\
\end{array}%
\right.
\end{equation}
To prove the claim, we take $u_n(\phi_1+\frac{c}{n})$ as a test
function in \eqref{PPP111} with  $c\ge
\sup\limits_{\bar{\O}}\dfrac{\phi_1(x)}{d^{s}(x)}$, then

\begin{eqnarray*}
\dyle \int_{\O}(\phi_1+\frac{c}{n}) u_n (-\Delta )^s u_n dx &= &\dyle \int_{\O}(\phi_1+\frac{c}{n}) |(\Delta)^{\frac{s}{2}} u_n|^{2}+\int_{\O}u_n(-\Delta)^{\frac{s}{2}}\phi_1(-\Delta)^{\frac{s}{2}}u_n dx\\ &-& \dyle \frac{a_{N,s}}{2}\int_{\O}u_n(x)\int_{\ren}\frac{(u_n(x)-u_n(y))(\phi_1(x)-\phi_1(y))}{|x-y|^{N+s}}dxdy\\ &\leq &
C\io \dfrac{u_n^{q+1}}{(d+\frac 1n)^s}dx.
\end{eqnarray*}
Thus
\begin{eqnarray*} & \dyle\int_{\O}(\phi_1+\frac{c}{n}) |(\Delta)^{\frac{s}{2}} u_n|^{2}+\frac{\l_1}{2}\int_{\O}u_n^2\phi_1 dx -\dyle \frac{a_{N,s}}{2}\int_{\O}u_n(x)\int_{\ren}\frac{(u_n(x)-u_n(y))(\phi_1(x)-\phi_1(y))}{|x-y|^{N+s}}dxdy\\
 &\le \dyle C\Big(\io \dfrac{u_n^{q+1}}{(d+\frac 1n)^{2s}}dx\Big)^{\frac 12}\Big(\io u_n^{q+1}dx\Big)^{\frac 12}\\
 `&\leq \dyle C\Big(\io u_n^{q+1}dx\Big)^{\frac 12}.
\end{eqnarray*}
Since
$$
\dyle \frac{a_{N,s}}{2}\int_{\O}u_n(x)\int_{\ren}\frac{(u_n(x)-u_n(y))(\phi_1(x)-\phi_1(y))}{|x-y|^{N+s}}dxdy \to 0\mbox{  as   }n\to \infty,
$$
and taking into consideration that
$$
\io u_n^{q+1}dx \to 0\mbox{  as   }n\to \infty,
$$
we reach that $\int_{\O}(\phi_1+\frac{c}{n}) |(\Delta)^{\frac{s}{2}} u_n|^{2}\to 0 \mbox{  as   }n\to \infty$ and the claim follows.

By the elliptic regularity, we conclude that $u_n\to 0$ strongly in $\mathcal{C}^{\g}_{loc}({\O})$. Since $d(x_n)\ge C>0$ for all $n$,
then up to a subsequence, $u_n(x_n)\to 0$ as $n\to \infty$, a contradiction with \eqref{gg1}. Hence $u\gneq 0$ and then the existence result follows.
\end{proof}

\begin{remarks}\label{rem2}
If we consider the problem
\begin{equation}\label{spect2}
\left\{
\begin{array}{cc}
\mathcal{A}^s u=\dfrac{u^q}{d^{2s}} &
\text{in }\Omega , \\
u=0 & \text{on } \p \Omega ,%
\end{array}%
\right.
\end{equation}
where $1<q<2^*-1$, then as in Theorem \ref{tt2}, we are able to show that problem \eqref{spect2} has a nontrivial solution.
This follows using the Caffarelli-Silvestre extension and the Liouville type result obtained in \cite{CCPS}.
\end{remarks}
\subsection{The critical case $q=2_{s}^*-1$.}\label{sub1}
In this section we will consider \eqref{PHI} with $q=2_{s}^*-1$ and $\O=B_R(0)$ is the ball of radius $R$
centered at the origin. We define the space
$$
H^s_{0,rad}(B_R(0))\equiv \{u\in H^s_{0}(B_R(0)):\:u\mbox{  radial  }\}.
$$
Our main existence result is the following.
\begin{Theorem}\label{rad}
Assume that $q=2^*-1$ and that $\O=B_R(0)\subset \ren$ with $N>2s$. Then the problem \eqref{PHI} has a bounded positive radial solution  $u\in  H^s_{0,rad}(B_R(0))$.
\end{Theorem}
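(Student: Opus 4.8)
The plan is to argue by approximation, combining a concentration--compactness analysis with the radial symmetry in order to localise the loss of compactness created by the critical exponent, and then to discard it by a sharp test--function estimate. Fix a sequence $q_n\nearrow q=2^*_s-1$ and, for each $n$, let $u_n\in H^s_{0,rad}(B_R(0))\cap L^\infty(B_R(0))$ be the radial mountain pass solution of
$$
(-\Delta)^s u_n=\frac{u_n^{q_n}}{d^{2s}(x)}\ \ \text{in }B_R(0),\qquad u_n=0\ \ \text{in }\mathbb{R}^N\setminus B_R(0),
$$
obtained exactly as in the proof of Theorem \ref{tt2} with the subcritical exponent $q_n$ (the subcritical growth restores the Palais--Smale condition, the minimax is carried out over $H^s_{0,rad}(B_R(0))$, and the principle of symmetric criticality makes $u_n$ a genuine critical point of the corresponding functional $J_n$). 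Choosing the same radial $v_1$ in every minimax class gives, uniformly in $n$,
$$
0<c_n=\Big(\frac12-\frac1{q_n+1}\Big)\|u_n\|^2_{H^s_0(B_R(0))}\le C ;
$$
hence $\{u_n\}$ is bounded in $H^s_{0,rad}(B_R(0))$, and testing the equation with $u_n$ and using the Hardy inequality \eqref{Ter_Hardy} gives $\|u_n\|_{L^\infty(B_R(0))}\ge\overline C>0$ for all $n$, exactly as in \eqref{gg1}. Up to a subsequence, $u_n\rightharpoonup u$ in $H^s_{0,rad}(B_R(0))$ and $u_n\to u$ in $L^p(B_R(0))$ for every $p<2^*_s$.

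I would then apply the fractional concentration--compactness principle to the bounded sequences $|(-\Delta)^{s/2}u_n|^2$ and $d^{-2s}u_n^{q_n+1}$ on $\overline{B_R(0)}$. Because the $u_n$ are radial and problem \eqref{PHI} is invariant under $O(N)$, the limiting defect measures are radial; a finite measure cannot carry infinitely many atoms of a common positive mass, so no atom can sit on a sphere of positive radius, and the \emph{only} possible concentration point is the origin. A possible loss of mass through $\partial B_R(0)$ is handled separately: away from the origin a radial Strauss--type estimate yields $\|u_n\|_{L^\infty(B_R(0)\setminus B_{R/2}(0))}\le C$, and testing $u_n$ against the barrier $\phi$ of Theorem \ref{avee} (with a suitable $\beta\in(s,s+1)$), as in the proof of Theorem \ref{tt1}, provides a uniform bound on $\int_{B_R(0)}u_n\,d^{-\beta}\,dx$; together with the Dirichlet behaviour $u_n\backsimeq d^s$ near $\partial B_R(0)$ (a consequence of Theorem \ref{avee}) this shows that $\big\{d^{-2s}u_n^{q_n+1}\big\}$ is uniformly integrable near the boundary.

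The decisive step is to rule out an atom at the origin, i.e.\ to prove
$$
\limsup_{n\to\infty}c_n<\frac sN\,S^{\frac N{2s}}\,R^{N-2s},
$$
the right--hand side being exactly the least energy carried away by a concentration at $0$, where the weight equals $d^{-2s}(0)=R^{-2s}$ (this value is forced by the Sobolev inequality of Theorem \ref{Sobolev}, the relation between the weighted and the unweighted defect masses, and the identity $c_n\to\frac sN\mu_0$). For this I would evaluate $J_n$ along truncated Aubin--Talenti bubbles $\psi_\varepsilon$ concentrated at the origin: using the standard asymptotics $\|\psi_\varepsilon\|^2_{H^s_0}=S^{\frac N{2s}}+O(\varepsilon^{N-2s})$ and the expansion $(R-|x|)^{-2s}=R^{-2s}\big(1+\tfrac{2s}{R}|x|+o(|x|)\big)$, one gets $\int d^{-2s}\psi_\varepsilon^{2^*_s}\,dx=R^{-2s}S^{\frac N{2s}}\big(1+c_1\varepsilon+o(\varepsilon)\big)$ with $c_1>0$, so that $c_n\le\sup_{t\ge0}J_n(t\psi_\varepsilon)$ and, letting $n\to\infty$ and then optimising in $\varepsilon$,
$$
\limsup_{n\to\infty}c_n\le\frac sN\,S^{\frac N{2s}}\,R^{N-2s}\,\big(1-c_2\varepsilon+O(\varepsilon^{N-2s})\big),\qquad c_2>0 .
$$
When $N>2s+1$ the strictly negative first--order term beats the remainder and the strict inequality follows by taking $\varepsilon$ small; in the remaining range $2s<N\le 2s+1$ one needs a finer test function (a Green's--function corrected bubble, in the spirit of Brezis--Nirenberg) or a more careful use of the singularity of $d^{-2s}$ near $\partial B_R(0)$. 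Once the strict inequality is established, the previous two steps force $u_n\to u$ strongly in $H^s_0(B_R(0))$, hence $u$ solves \eqref{PHI}; the lower bound $\|u_n\|_{L^\infty}\ge\overline C$ excludes $u\equiv0$, and the positivity of $u$ together with the estimate $u\backsimeq d^s$ follow from the comparison principle of Lemma \ref{compar} and Theorem \ref{avee}, as in Theorems \ref{tt1} and \ref{tt2}.

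I expect the sharp energy estimate of the third step to be the main obstacle: controlling the competition between the bubble--truncation error $O(\varepsilon^{N-2s})$ and the gain $O(\varepsilon)$ produced by the non--constancy of $d^{-2s}$ near the origin is genuinely delicate in the low dimensions $2s<N\le 2s+1$ (for instance $N=2$, $\frac12\le s<1$), and is the fractional analogue of the critical--dimension phenomenon in the Brezis--Nirenberg problem. A secondary, more technical difficulty is the control of the mass of $d^{-2s}u_n^{q_n+1}$ near $\partial B_R(0)$, where the Hardy inequality — and therefore the standing hypothesis $s\ge\frac12$ — is used in an essential way.
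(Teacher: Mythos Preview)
Your route is a genuine alternative to the paper's, and the overall architecture (radial concentration--compactness localising the defect at the origin, then excluding it by a strict energy inequality) is sound. The paper, however, avoids entirely the delicate Brezis--Nirenberg bubble computation that you identify as the main obstacle. Instead of approximating by subcritical exponents, it minimises directly the weighted Rayleigh quotient
\[
S(R)=\inf_{\phi\in H^s_{0,rad}(B_R(0))}\frac{\|\phi\|^2_{H^s_0}}{\Big(\displaystyle\int_{B_R(0)}\frac{|\phi|^{2^*_s}}{d^{2s}}\,dx\Big)^{2/2^*_s}},
\]
and observes the elementary scaling identity $S(R)=R^{4s/2^*_s}S(1)$. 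This is the whole point: by choosing $R$ small one gets $S(R)<S$ \emph{for free}, so the concentration alternative $S\nu_0^{2/2^*_s}\le \mu_0\le S(R)\nu_0$ kills any atom at the origin without any test--function estimate; existence for one $R$ then gives existence for all $R$ by rescaling. In particular the argument is uniform over all $N>2s$, and the low--dimension difficulty you flag simply does not arise.

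By contrast, your approach fixes $R$ and tries to beat the threshold $\frac sN S^{N/(2s)}R^{N-2s}$ by exploiting the first--order expansion of $(R-|x|)^{-2s}$ at the origin. This is the harder road: the gain you extract is of order $\varepsilon$, while the truncation error is $O(\varepsilon^{N-2s})$, so the balance genuinely fails in the range $N\le 2s+1$ unless you bring in an additional mechanism. The secondary issue you mention, uniform integrability of $d^{-2s}u_n^{q_n+1}$ near $\partial B_R$, the paper handles very cleanly: for a radial minimising sequence the Strauss estimate gives $|u_n(x)|^{2^*_s}\le C|x|^{-2s}\|u_n\|^{2^*_s-2}_{H^s_0}|u_n(x)|^2$ on $\{|x|\ge R_1\}$, and Hardy's inequality (this is where $s\ge\frac12$ enters) then dominates $\int_{R_1}^R (R-r)^{-2s}|u_n|^{2^*_s}r^{N-1}\,dr$ uniformly, so no barrier argument is needed. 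If you want to keep your scheme, the cheapest fix is to import the paper's scaling observation: prove your strict energy bound only for $R$ small (where even a crude comparison $d^{-2s}\ge R^{-2s}$ already gives $c_n\le \frac sN S^{N/(2s)}R^{N-2s}$ with strict inequality coming from the boundary contribution), and then rescale.
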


Let us define $S(R)$ as follows,
\begin{equation}\label{SSS}
S(R)\equiv\inf_{\phi\in
H^s_{0,rad}(B_R(0))}\dfrac{\dyle \dfrac{a_{N,s}}{2}\int_{\ren}
\int_{\ren}\dfrac{(\phi(x)-\phi(y))^2}{|x-y|^{N+2s}}dxdy}{\Big(\dyle\int_{B_R(0)}
\frac{|\phi|^{2_{s}^*}}{d^{2s}(x)}dx\Big)^{\frac{2}{2_{s}^*}}}.
\end{equation}
In order to prove Theorem \ref{rad}, it suffices to show that $S(R)$ is achieved.

We begin by the next proposition.
\begin{Proposition}\label{DDDp}
We have
\begin{enumerate}
\item $S(R)>0$ for all $R>0$,
\item $S(R)=R^{\frac{4s}{2_{s}^*}}S(1).$
\end{enumerate}
\end{Proposition}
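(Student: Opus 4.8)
The plan is to prove (2) first, by a scaling (dilation) argument, and then to deduce (1) by reducing to the case $R=1$ and splitting the ball into an interior part and a boundary annulus.

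\emph{Part (2).} For $\phi\in H^s_{0,rad}(B_R(0))$ set $\widetilde\phi(y):=\phi(Ry)$; this is a bijection between $H^s_{0,rad}(B_R(0))$ and $H^s_{0,rad}(B_1(0))$. Changing variables $x=Ry$ in the Gagliardo seminorm gives $\|\phi\|^2_{H^s_0(B_R(0))}=R^{N-2s}\|\widetilde\phi\|^2_{H^s_0(B_1(0))}$, and, since $d(x)=R-|x|=R(1-|x/R|)$, the same substitution gives $\int_{B_R(0)}|\phi|^{2^{*}_{s}}d^{-2s}\,dx=R^{N-2s}\int_{B_1(0)}|\widetilde\phi|^{2^{*}_{s}}d^{-2s}\,dy$. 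Hence the Rayleigh quotient in \eqref{SSS} picks up the factor $R^{(N-2s)(1-2/2^{*}_{s})}=R^{2s(N-2s)/N}=R^{4s/2^{*}_{s}}$, and taking the infimum over $\phi$ (equivalently over $\widetilde\phi$) yields $S(R)=R^{4s/2^{*}_{s}}S(1)$.

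\emph{Part (1).} By Part (2) it suffices to show $S(1)>0$, i.e. that there is $C>0$ with $\big(\int_{B_1(0)}|\phi|^{2^{*}_{s}}d^{-2s}\,dx\big)^{2/2^{*}_{s}}\le C\|\phi\|^2_{H^s_0(B_1(0))}$ for every $\phi\in H^s_{0,rad}(B_1(0))$. Write $B_1(0)=B_{1/2}(0)\cup A$ with $A:=B_1(0)\setminus\overline{B_{1/2}(0)}$. On $B_{1/2}(0)$ one has $d\ge\tfrac12$, so Theorem \ref{Sobolev} gives $\int_{B_{1/2}(0)}|\phi|^{2^{*}_{s}}d^{-2s}\le 2^{2s}\|\phi\|^{2^{*}_{s}}_{L^{2^{*}_{s}}(\ren)}\le C\|\phi\|^{2^{*}_{s}}_{H^s_0(B_1(0))}$. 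On $A$, where the weight is singular, I would use a one–dimensional reduction: writing $\phi(x)=\psi(|x|)$ and integrating out the angular variables, the $N$–dimensional Gagliardo seminorm of a radial function restricted to $A\times A$ is comparable to the one–dimensional seminorm $\|\psi\|^2_{\dot H^s(1/2,1)}$, which together with a Poincaré inequality gives $\|\psi\|_{H^s(1/2,1)}\le C\|\phi\|_{H^s_0(B_1(0))}$. For $\tfrac12<s<1$ the one–dimensional embedding $H^s(1/2,1)\hookrightarrow L^\infty(1/2,1)$ then yields $\|\phi\|_{L^\infty(A)}\le C\|\phi\|_{H^s_0(B_1(0))}$, and consequently, using the Hardy inequality \eqref{Ter_Hardy}, $\int_A|\phi|^{2^{*}_{s}}d^{-2s}\,dx\le\|\phi\|^{2^{*}_{s}-2}_{L^\infty(A)}\int_{B_1(0)}\phi^2 d^{-2s}\,dx\le C\|\phi\|^{2^{*}_{s}}_{H^s_0(B_1(0))}$. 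Adding the two contributions gives $S(1)>0$.

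\emph{Main obstacle.} The whole difficulty sits in the boundary annulus $A$: the inequality is false for non–radial competitors — an Aubin–Talenti bubble placed near a point of $\partial B_1(0)$ at a scale much smaller than its distance to the boundary drives the quotient in \eqref{SSS} to $0$ — so any proof must genuinely exploit radiality, and this is precisely what the comparison between the $N$–dimensional seminorm of a radial function and a one–dimensional fractional seminorm is for. I also expect the endpoint $s=\tfrac12$ to require separate care, since $H^{1/2}(1/2,1)$ does not embed in $L^\infty$; there one should instead argue on $A$ through the finiteness $\int u^2/d<\infty$ encoded in $H^{1/2}_{00}$ (or through an exponential–integrability substitute for the Sobolev embedding) to close the boundary estimate.
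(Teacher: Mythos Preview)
Your overall strategy---split the ball into an interior piece where $d$ is bounded below (handled by Sobolev) and a boundary annulus where you bound $|\phi|^{2^*_s}=|\phi|^{2^*_s-2}\phi^2$ and invoke Hardy---is exactly the paper's. Part (2) is also disposed of in the paper by the same scaling, so your computation there is fine.

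The difference, and the source of your difficulty at $s=\tfrac12$, is in how the $L^\infty$ control on the annulus is obtained. The paper does \emph{not} pass to a one--dimensional seminorm. It quotes directly the radial Strauss--type pointwise estimate (from \cite{PNI})
\[
|\phi(x)|\le C\,|x|^{-\frac{N-2s}{2}}\,\|\phi\|_{H^s_{0,rad}(B_R(0))},
\]
valid for every radial $\phi\in H^s_{0,rad}$. On the annulus $\{R_1<|x|<R\}$ this gives $|\phi(x)|^{2^*_s-2}\le C\,R_1^{-2s}\|\phi\|^{2^*_s-2}_{H^s_{0,rad}}$ immediately, and then Hardy finishes exactly as you wrote. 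This works uniformly for all $s\in[\tfrac12,1)$, so the endpoint $s=\tfrac12$ needs no separate treatment.

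By contrast, your route through the comparison of the $N$--dimensional Gagliardo seminorm of a radial function with a $1$--D fractional seminorm, followed by the embedding $H^s(\tfrac12,1)\hookrightarrow L^\infty$, is more involved: the seminorm comparison is stated without justification (it is true on an annulus bounded away from the origin, but it is not a one--liner), and, as you yourself note, the $L^\infty$ embedding fails at $s=\tfrac12$, leaving a genuine gap there. The paper's approach sidesteps both issues with a single cited inequality.
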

\begin{proof}
The proof follows the same arguments as in \cite{ABDM}. For the reader convenience, we include here some details.
Since $\phi\in H^s_{0,rad}(B_R(0))$, then from \cite{PNI}, it holds that
\begin{equation}\label{radial}
|\phi(x)|\le C|x|^{-\frac{N-2s}{2}}||\phi||_{H^s_{0,rad}(B_R(0))}
\end{equation}
with $C\equiv C(N,s)$. Let $0<R_1<R$, then
$$
\begin{array}{lll}
\dyle\int_0^R\frac{|\phi|^{2^*_s}}{(R-r)^{2s}}r^{N-1}dr &= &\dyle
\int_0^{R_1}\frac{|\phi|^{2^*_s}}{(R-r)^{2s}}r^{N-1}dr+ \dyle
\int_{R_1}^R\frac{|\phi|^{2^*_s}}{(R-r)^{2s}}r^{N-1}dr\\ \\&=&
I_1(R_1)+I_2(R_1).
\end{array}
$$
Using Sobolev inequality, we obtain that
\begin{equation}\label{est00}
I_1(R_1)\le \dyle
\dfrac{1}{(R-R_1)^{2s}}\int_0^{R}|\phi|^{2^*_s}r^{N-1}dr\le
C(R,R_1,N)\,||\phi||_{H^s_{0,rad}(B_R(0))}^{2^*_s}.
\end{equation}
Respect to $I_2$, since $\phi\in H^s_{0,rad}(B_R(0))$, using
\eqref{radial}, we reach that
$$
\begin{array}{lll}
|\phi(x)|^{2^*_s} &= &|\phi(x)|^{2^*_s-2}|\phi(x)|^{2}\\ \\
&\le &C|x|^{-2s}||\phi||^{2^*_s-2}_{H^s_{0,rad}(B_R(0))}|\phi(x)|^{2}\\ \\
&\le & CR^{-2s}_1||\phi||^{2^*_s-2}_{H^s_{0,rad}(B_R(0))}|\phi(x)|^{2}.
\end{array}
$$
Thus, using Hardy inequality,
\begin{equation}\label{est001}
I_2(R)\le
CR^{-2s}_1||\phi||^{2_{s}^*-2}_{H^s_{0,rad}(B_R(0))}\int_{0}^R
\frac{|\phi|^{2}}{(R-r)^{2s}}r^{N-1} dr\le
CR^{-2s}_1||\phi||^{2_{s}^*}_{H^s_{0,rad}(B_R(0))}.
\end{equation}
Therefore, by \eqref{est00} and \eqref{est001} we reach that $S(R)\ge \dfrac{1}{C(N,R,R_1)}>0.$

The second point follows using a rescaling argument.
\end{proof}

We are now able to prove Theorem \ref{rad}.

{\bf Proof of Theorem \ref{rad}.}

Taking into consideration the second point in Proposition \ref{DDDp}, then, we have just to show that $S(R)$ is achieved for some $R>0$.

From the second point in Proposition \ref{DDDp}, we get the existence of $R<1$ such that $S(R)<S$, the Sobolev constant defined in Theorem \ref{Sobolev}. Fix a such $R$ and let $\{u_n\}_n\subset H^s_{0,rad}(B_R(0))$, be a minimizing sequence of $S(R)$ with
$$\dyle\int_0^R \dfrac{|u_n|^{2_{s}^*}}{(R-r)^{2s}}r^{N-1}dr=1.$$

Without loss of generality we can assume that $u_n\ge 0$. Thus
$$||u_n||_{H^s_{0,rad}(B_R(0))}\le C.$$

Hence we get the existence of $u\in H^s_{0,rad}(B_R(0))$ such that
$u_n\rightharpoonup u$ weakly in  $H^s_{0,rad}(B_R(0))$, $u_n\to
u$ strongly in $L^\s(B_R(0))  \,\forall\, \s<2_{s}^*$ and $u_n\to u$
strongly in $L^\s(B_R(0)\backslash B_\e(0))$ for all $\s>1$ and
for all $\e>0$.

We claim that $u\neq 0$ and then $u$ solves \eqref{PHI} with $q=2_{s}^*-1$.

We argue by contradiction. Assume that $u\equiv 0$, then $u_n\to 0$ strongly in
$L^\s(B_R(0)\backslash B_\e(0))$ for all $\s>1$ and for all
$\e>0$. Fix $0<R_1<R$, then
$$
\dyle\frac{u_n^{2^*_s}}{(R-r)^{2s}}r^{N-1}\le
CR^{-2s}_1||u_n||^{2_{s}^*-2}_{H^s_{0,ra}(B_R(0))}
\frac{u_n^{2}}{(R-r)^{2s}}r^{N-1} .
$$

Since by the Hardy inequality it holds that $\dint_{0}^R\dfrac{u_n^{2}}{(R-r)^{2s}}r^{N-1}< \infty$.

Thus by the Dominated Convergence Theorem, we get
$$
\dyle\int_{R_1}^R\frac{|u_n|^{2_{s}^*}}{(R-r)^{2s}}r^{N-1}dr\to 0\mbox{
as   }n\to \infty.
$$
Thus, for all $1<R_1<R$, we have
$$
\dyle\int_{B_{R_1}(0)}\frac{|u_n|^{2_{s}^*}}{(R-|x|)^{2s}}dx\to 1\mbox{
as }n\to \infty.
$$
Hence, in order to show the compactness of the sequence $\{u_n\}_n$  we have to avoid any concentration in zero.

Using Ekeland variational principle, we obtain that, up to a
subsequence,
\begin{equation}\label{SAN}
(-\D)^s u_n=S(R)\frac{u_n^{2_{s}^*-1}}{(R-|x|)^{2s}}+o(1).
\end{equation}
Now, by the concentration compactness principle, see \cite{pallu}, and using the fact that $u_n$ is a radial function, it follows that
\begin{equation}\label{CC1}
|u_n|^{2_{s}^*}\rightharpoonup \nu=\nu_0\d_{0}, \:\:\:|(\D)^{\frac{s}{2}} u_n|^2\rightharpoonup \mu\ge \widetilde{\mu}+\mu_0\d_{0},
\end{equation}
with
\begin{equation}\label{CC2}
S\nu^{\frac{2}{2^*_s}}_0  \leq\mu_0,
\end{equation}
where $\widetilde{\mu}$ is a positive measure with $supp( \widetilde{\mu})\subset \overline{B_R(0)} $.

\noindent For $\e>0$, we consider $\phi_\e\in \mathcal{C}^\infty_0(B_R(0))\cap H^s_{0,rad}(B_R(0))$ such that $0\le \phi_\e\le 1$,
$$ \phi_\e\equiv
1 \quad \hbox{in }\,B_\e(0), \quad \phi_\e\equiv 0\quad \hbox{ in } \,B_{R}(0)\backslash B_{2\e}(0) \mbox{ and }\dfrac{|\phi_\e(x)-\phi_\e(y)|}{|x-y|}\le \frac{C}{\e}.$$
Using $u_n\phi_\e$ as a test function in \eqref{SAN}, it holds that
\begin{equation}\label{concent}
\int_{B_R(0)}\phi_\e u_n (-\Delta )^s u_n dx =S(R)\int_{B_R(0)} \frac{u_n^{2_{s}^* }\phi_\e}{(R-|x|)^{2s}} dx+o(1).
\end{equation}
It is clear that
$$
S(R)\int_{B_R(0)} \frac{u_n^{2_{s}^* }\phi_\e}{(R-|x|)^{2s}} dx\to \nu_0 S(R)\mbox{  as   }n\to \infty \mbox{  and  }\e\to 0.
$$
On the other hand, taking into consideration the properties of operator $(-\D)^s$, we obtain that
\begin{eqnarray*}
\dyle \int_{B_R(0)}\phi_\e u_n (-\Delta )^s u_n dx &= &\dyle \int_{B_R(0)}\phi_\e |(\Delta)^{\frac{s}{2}} u_n|^{2}+\int_{B_R(0)}u_n(-\Delta)^{\frac{s}{2}}\phi_\e(-\Delta)^{\frac{s}{2}}u_n dx\\ &-& \dyle \frac{a_{N,s}}{2}\int_{B_R(0)}u_n(x)\int_{\ren}\frac{(u_n(x)-u_n(y))(\phi_\e(x)-\phi_\e(y))}{|x-y|^{N+s}}dxdy\\ &= &
A_1(\e,n)+A_2(\e,n)+A_3(\e,n).
\end{eqnarray*}
We will estimate each term in the last identity.

Since $supp( \widetilde{\mu})\subset \overline{B_R(0)}$, then using \eqref{CC1}, letting $n\to \infty$ and $\e\to 0$, it holds that
$$
\limit_{\e\to 0}\limit_{n\to \infty}A_1(\e,n)=\limit_{\e\to 0}\limit_{n\to \infty}\int_{B_R(0)} \phi_\e |(\Delta)^{\frac{s}{2}} u_n|^{2}dx \to  \mu \ge\mu_0.
$$
We estimate now the term $A_2(\e,n)$. Recall that $\dfrac{|\phi_\e(x)-\phi_\e(y)|}{|x-y|}\le \dfrac{C}{\e}$, then it follows that
$|(-\Delta)^{\frac{s}{2}}\phi_\e|\le \dfrac{C}{\e}$.

We have
\begin{eqnarray*}
&\dyle\int_{B_R(0)}\left|u_n(-\Delta)^{\frac{s}{2}}\phi_\e(-\Delta)^{\frac{s}{2}}u_n \right|dx=\dyle\int_{B_{2\e}(0)}\left|u_n(-\Delta)^{\frac{s}{2}}\phi_\e(-\Delta)^{\frac{s}{2}}u_n\right|dx\\ &\dyle +\dyle \int_{B_R(0)\backslash B_{2\e}(0)}\left|u_n(-\Delta)^{\frac{s}{2}}\phi_\e(-\Delta)^{\frac{s}{2}}u_n\right|dx=J_1(\e,n)+J_2(\e,n).
\end{eqnarray*}
Since
\begin{eqnarray*}
& \dyle\int_{B_{2\e}(0)}\left|u_n(-\Delta)^{\frac{s}{2}}\phi_\e(-\Delta)^{\frac{s}{2}}u_n dx\right|\le \dyle \frac{C}{\e}\int_{B_{2\e}(0)}\left|u_n(-\Delta)^{\frac{s}{2}}u_n\right|dx,
\end{eqnarray*}
taking into consideration that
$$
\int_{B_{2\e}(0)}\left|u_n(-\Delta)^{\frac{s}{2}}u_n\right|dx\to \int_{B_{2\e}(0)}\left|u(-\Delta)^{\frac{s}{2}}u\right|dx\mbox{  as  }n\to \infty,
$$
$$
\int_{B_{2\e}(0)}\left|u(-\Delta)^{\frac{s}{2}}u\right|dx\le C\e ||u||_{L^{2^*_s}(B_{2\e}(0))},
$$
and since we have assumed that $u=0$, we conclude that $\limit_{\e\to 0}\limit_{n\to \infty}J_1(\e,n)=0.$

\noindent In the same way and using a duality argument we get $\limit_{\e\to 0}\limit_{n\to \infty}J_2(\e,n)=0$.

Combining the above estimates, { {it follows}} that $\limit_{\e\to 0}\limit_{n\to \infty}A_2(\e,n)=0$.

\

We deal now with the last term $A_3(\e,n)$. We have
\begin{eqnarray*}
&\dyle \int_{B_R(0)}u_n(x)\int_{\ren}\frac{(u(x)-u(y))(\phi(x)-\phi(y))}{|x-y|^{N+s}}dxdy=\\
&\dyle \int_{B_R(0)}u_n\int_{B_R(0)}\frac{(u_n(x)-u_n(y))(\phi(x)-\phi(y))}{|x-y|^{N+s}}dxdy + \\ \\ &\dyle \int_{B_R(0)}u_n\int_{\mathcal{C}B_R(0)}\frac{(u_n(x)-u_n(y))(\phi(x)-\phi(y))}{|x-y|^{N+s}}dxdy\\
&\equiv B_1(\e,n)+B_2(\e,n).
\end{eqnarray*}
Respect to $B_1(\e,n)$, we have
\begin{eqnarray*}
&\bigg|\dyle\int_{B_R(0)}u_n(x)\int_{B_R(0)}\frac{(u_n(x)-u_n(y))(\phi(x)-\phi(y))}{|x-y|^{N+s}}dxdy\bigg|\le \\
&\dyle \bigg(\int_{B_R(0)}\int_{B_R(0)}\frac{(u_n(x)-u_n(y))^2}{|x-y|^{N+s}}dxdy\bigg)^{\frac 12}\times
\bigg(\int_{B_R(0)}u^2_n(x)\int_{B_R(0)}\frac{(\phi_\e(x)-\phi_\e(y))^2dy}{|x-y|^{N+s}}dx\bigg)^{\frac 12}\le \\
&\dyle C \bigg(\int_{B_R(0)}u^2_n(x)\int_{B_R(0)}\frac{(\phi_\e(x)-\phi_\e(y))^2dy}{|x-y|^{N+s}}dx\bigg)^{\frac 12},
\end{eqnarray*}
where we have used the fact that the sequence $\{u_n\}_n$ is bounded in  $H^s_{0,rad}(B_R(0))$. Since $B_R(0)\times B_R(0)$ is a bounded domain, then as in the estimate of the term $J_1(\e,n)$, we can show that
$$
\int_{B_R(0)}u^2_n(x)\int_{B_R(0)}\frac{(\phi_\e(x)-\phi_\e(y))^2dy}{|x-y|^{N+s}}dx\to 0\mbox{  as  }n\to \infty \mbox{  and  }\e\to 0.
$$
Respect to $B_2(\e,n)$, since $supp(\phi_\e), supp(u_n) \subset B_R(0)$, using H\"older inequality and taking into consideration that
$B_R(0)\times CB_R(0)\subset D_{B_R(0)}$, it holds that
\begin{eqnarray*}
&\dyle \bigg|\int_{B_R(0)}u_n\int_{\mathcal{C}B_R(0)}\frac{(u_n(x)-u_n(y))(\phi_\e(x)-\phi_\e(y))}{|x-y|^{N+s}}dxdy\bigg|\le\\
&\dyle \bigg(\iint_{D_{B_R(0)}}\frac{(u_n(x)-u_n(y))^2}{|x-y|^{N+s}}dxdy\bigg)^{\frac 12}\times
\bigg(\int_{B_R(0)}u^2_n(x)\phi^2_\e(x)\int_{\mathcal{C}B_R(0)}\frac{dy}{|x-y|^{N+s}}dx\bigg)^{\frac 12}\le \\
&\dyle C\bigg(\int_{B_R(0)}u^2_n(x)\phi^2_\e(x)dx\bigg)^{\frac 12},
\end{eqnarray*}
where in the last estimate, we have used the fact that $\int_{\mathcal{C}B_R(0)}\frac{dy}{|x-y|^{N+s}}dx\le C(R)$.
Hence $\limit_{\e\to 0}\limit_{n\to \infty}B_2(\e,n)=0$.

Therefore, combining the above estimates and passing to the limit in $n$ and $\e$ in \eqref{concent}, we conclude that
$$
\mu_0\le S(R)\nu_0.
$$
Since $ S\nu^{\frac{2}{2^*_s}}_0  \leq\mu_0$, then $S\nu^{\frac{2}{2^*_s}}_0  \leq S(R)\nu_0$. If $\nu_0=0$, then $\mu_0=0$. Hence
$$
\dyle\int_{B_{R}(0)}\frac{|u_n|^{2_s^*}}{(R-|x|)^{2s}}dx\to
\dyle\int_{B_{R}(0)}\frac{|u|^{2_s^*}}{(R-|x|)^{2s}}dx=1
$$
a contradiction with the fact that $u\equiv 0$.

Assume that $\nu_0>0$, then $S\le S(R)\nu^{1-\frac{2}{2^*_s}}_0$. Recall that we have chosen $R<1$ such that
$S(R)\equiv R^{\frac{4s}{2_s^*}}S(1)<S$. In this way we get easily that $\nu_0<1$. Hence  $S\le R^{\frac{4s}{2_s^*}}S(1)$.
Taking into consideration that the Sobolev constant $S$
is independent of the domain, and in particular it is independent
of $R$, then letting $R\to 0$, we reach a contradiction.

Thus $u\neq 0$ and it solves \eqref{PHI} with $q=2_s^*-1$. The strong maximum
principle allows us to get that $u>0$ in $B_R(0)$.

Notice that, from the above computation, we can conclude that
$$
\dyle\int_{B_{R}(0)}\frac{|u_n|^{2_s^*}}{(R-|x|)^2}dx\to
\dyle\int_{B_{R}(0)}\frac{|u|^{2_s^*}}{(R-|x|)^2}dx=1
$$
and then $u$ realize $S(R)$. \cqd

\section{The case : $q<0$}\label{sec4}
In this section, we consider the following problem
\begin{equation}\label{PPP3}
\left\{
\begin{array}{rcll}
(-\Delta)^{s} u &= & \dfrac{f}{u^{\s} d^{\a}(x)} & \text{ in }\Omega , \\
u & > & 0 & \text{ in }\Omega , \\
u & = & 0 & \text{ in }\ren \setminus\Omega,%
\end{array}%
\right.
\end{equation}
where $\s=-q>0$, $\a>0$, $0<s<1$, $d(x)=dist(x,\p\O)$ and $f$ is a nonnegative function under suitable summability conditions that will be specified later.

In the local case, the problem \eqref{PPP3} was treated in \cite{DHR}, see also \cite{BOOR}. In the case where $0<s<1$ and $\a=0$ some existence results were obtained in \cite{BBMI}.

\

In order to study the solvability of problem \eqref{PPP3}, we will analyze the associated approximating problem. Indeed for every $n\in \ene^*$, we consider the following problem
\begin{equation}\label{PPP2}
\left\{
\begin{array}{rcll}
(-\Delta)^{s} u_{n} &= & \dfrac{f_n}{(u_{n}+\frac 1n)^{\s} (d(x)+\frac 1n)^{\a}} & \text{ in }\Omega , \\
u_{n} & > & 0 & \text{ in }\Omega , \\
u_{n}& =& 0 & \text{ in }\ren \setminus\Omega,%
\end{array}%
\right.
\end{equation}
where $f_n:=\min(n,f)$.

Notice that the existence of solution $u_{n}\in H^s_0(\O)$ to \eqref{PPP2} follows using the Schauder fixed point theorem. Obviously $u_{n}\in L^{\infty}(\Omega)$.

We start {{by}} proving the next result.
\begin{Lemma}\label{inc}
The sequence $\{u_{n}\}_n$ of the solutions to problem \eqref{PPP2} is increasing in $n$  and for every $\widetilde{\O}\subset\subset \O$, there exists a positive constant $C(\O)$ independent of $n$, such that
$$
u_{n}\geq C(\widetilde{\O})>0.
$$
\end{Lemma}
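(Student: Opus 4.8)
The statement to prove is Lemma \ref{inc}: that the sequence $\{u_n\}_n$ of solutions to the approximating problem \eqref{PPP2} is monotone increasing in $n$, and that it is locally uniformly bounded below by a positive constant on every $\widetilde\O\subset\subset\O$. The plan is to treat the two assertions separately; the monotonicity will be obtained from the comparison principle, and the lower bound from a barrier argument combined with the strong maximum principle.

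For the monotonicity, the idea is to compare $u_n$ and $u_{n+1}$ directly. First I would observe that the maps $t\mapsto \dfrac{f_n(x)}{(t+\frac1n)^\s(d(x)+\frac1n)^\a}$ are, for fixed $x$, decreasing in $t$ (since $\s>0$), so that Lemma \ref{compar} applies with a suitable Caratheodory function. Concretely, note that $f_n\le f_{n+1}$ and $\frac1{n}\ge \frac1{n+1}$ both pointwise, hence
$$
\frac{f_n}{(t+\tfrac1n)^\s(d(x)+\tfrac1n)^\a}\ \le\ \frac{f_{n+1}}{(t+\tfrac1{n+1})^\s(d(x)+\tfrac1{n+1})^\a}
$$
for every $t\ge 0$. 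Therefore $u_n$ is a subsolution of the equation solved by $u_{n+1}$ (with nonlinearity $f(x,t)=\frac{f_{n+1}(x)}{(t+\frac1{n+1})^\s(d(x)+\frac1{n+1})^\a}$, whose quotient $f(x,t)/t$ is decreasing in $t>0$), while $u_{n+1}$ solves the corresponding equation exactly. Since both functions lie in $H^s_0(\O)$, Lemma \ref{compar} yields $u_{n+1}\ge u_n$ a.e.\ in $\O$. One has to be slightly careful because the nonlinearities are strictly positive (so the solutions are genuinely positive in $\O$) and bounded for each fixed $n$, which is exactly why the $u_n$ are well defined in $H^s_0(\O)\cap L^\infty(\O)$ by Schauder; these facts make the application of the comparison principle legitimate.

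For the uniform lower bound on $\widetilde\O\subset\subset\O$, the point is that already $u_1$ is a positive, nontrivial solution of a problem of the form $(-\Delta)^s u_1=g_1$ with $g_1=\frac{f_1}{(u_1+1)^\s(d+1)^\a}\ge 0$ and $g_1\not\equiv 0$ (provided $f\not\equiv0$, which we assume). By the strong maximum principle for $(-\Delta)^s$, $u_1>0$ everywhere in $\O$; since $u_1$ is continuous up to the boundary (being bounded with bounded right-hand side, by the regularity results already cited, e.g.\ \cite{CDDS,LPPS}) and $\widetilde\O$ is a compact subset of $\O$, we get $u_1\ge C(\widetilde\O)>0$ on $\widetilde\O$. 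Combining this with the monotonicity established above gives $u_n\ge u_1\ge C(\widetilde\O)$ on $\widetilde\O$ for all $n$, which is the claim. The constant depends only on $\widetilde\O$ (through $\mathrm{dist}(\widetilde\O,\p\O)$) and on the fixed data $f_1,\O,N,s,\a,\s$, not on $n$.

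The main obstacle is the first step: making rigorous the comparison $u_n\le u_{n+1}$ in the weak/energy setting, i.e.\ verifying that Lemma \ref{compar} genuinely applies. One must check that $u_n,u_{n+1}\in H^s_0(\O)$ (true by construction via Schauder), that the relevant nonlinearity is Caratheodory with the monotone-quotient property on $t>0$, and that $u_n$ can indeed be used as a subsolution for the $u_{n+1}$-equation despite the nonlinearity being evaluated at the two different arguments $u_n$ and $u_{n+1}$; the inequality displayed above between the two quotients at $t=u_n(x)$ is what bridges this gap. Everything else — the strong maximum principle, continuity up to the boundary of $u_1$, and the passage from the pointwise bound on $u_1$ to a bound on all $u_n$ — is routine given the results recalled earlier in the paper.
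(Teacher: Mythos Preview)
Your proposal is correct and reaches the same conclusions as the paper, with the only real difference in the monotonicity step. The paper argues directly: it subtracts the two equations, bounds the right-hand side using $f_n\le f_{n+1}$ and $\tfrac1n\ge\tfrac1{n+1}$ to get
\[
(-\Delta)^s(u_n-u_{n+1})\le \frac{f_{n+1}}{(d+\tfrac1{n+1})^\a}\cdot\frac{(u_{n+1}+\tfrac1{n+1})^\s-(u_n+\tfrac1{n+1})^\s}{(u_{n+1}+\tfrac1{n+1})^\s(u_n+\tfrac1{n+1})^\s},
\]
tests against $(u_n-u_{n+1})_+$, and uses the pointwise inequality $\int (-\Delta)^s w\,w_+\ge \|w_+\|_{H^s_0}^2$ to conclude $(u_n-u_{n+1})_+=0$. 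You instead package the same inequalities into the statement ``$u_n$ is a subsolution of the $u_{n+1}$-equation'' and invoke the comparison principle of Lemma~\ref{compar}; this is legitimate since your nonlinearity $f(x,t)=f_{n+1}(x)\big[(t+\tfrac1{n+1})^\s(d(x)+\tfrac1{n+1})^\a\big]^{-1}$ has $f(x,t)/t$ strictly decreasing and both $u_n,u_{n+1}\in H^s_0(\O)$. Your route is shorter and exploits a tool already provided in Section~\ref{sec1}; the paper's route is self-contained and avoids any appeal to Picone. The argument for the uniform local lower bound via $u_1>0$ and the strong maximum principle is essentially identical in both proofs.
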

\begin{proof}
Fixed $n\in \ene$, then by subtracting, it holds that
$$
\begin{array}{lll}
\dyle(-\D)^s( u_{n}-u_{n+1}) & = & \dfrac{f_n}{(u_{n}+\frac 1n)^{\s} (d(x)+\frac 1n)^{\a}}-\dfrac{f_{n+1}}{(u_{n+1}+\frac 1{n+1})^{\s} (d(x)+\frac {1}{n+1})^{\a}}\dyle \\
 & \le & \dfrac{f_{n+1}}{(u_{n}+\frac 1{n+1})^{\s} (d(x)+\frac {1}{n+1})^{\a}}-\dfrac{f_{n+1}}{(u_{n+1}+\frac 1{n+1})^{\s} (d(x)+\frac {1}{n+1})^{\a}}\dyle \\
& = & \dfrac{f_{n+1}}{(d(x)+\frac {1}{n+1})^{\a}}\left(\dfrac{(u_{n+1}+\frac 1{n+1})^{\s}-(u_{n}+\frac 1{n+1})^{\s}} {(u_{n+1}+\frac 1{n+1})^{\s}(u_{n}+\frac 1{n+1})^{\s}}\right)\dyle .\\
\end{array}
$$
Using $( u_{n}-u_{n+1})_{+}$ as a test function, we obtain that
$$
\int_{\ren}\dfrac{f_{n+1}}{(d(x)+\frac {1}{n+1})^{\a}}\left(\dfrac{(u_{n+1}+\frac 1{n+1})^{\s}-(u_{n}+\frac 1{n+1})^{\s}} {(u_{n+1}+\frac 1{n+1})^{\s}(u_{n}+\frac 1{n+1})^{\s}}\right)( u_{n}-u_{n+1})_{+}dx\leq 0.
$$
Since
$$
\int_{\ren} (-\D)^s( u_{n}-u_{n+1})( u_{n}-u_{n+1})_{+}\ge \iint_{D_\O}\dfrac{( u_{n}-u_{n+1})_{+}^2}{|x-y|^{N+2s}}dxdy,
$$
we conclude that $(u_{n}-u_{n+1})_{+}=0$, and then $u_{n}\le u_{n+1}$, for all $n$.
On the other hand, we know that $u_{1}\in L^{\iy}(\O)$ and
$$
(-\D)^s u_{1}=\dfrac{f_1}{(u_{1}+1)^{\s} (d(x)+1)^{\a}}\ge \dfrac{f_1}{(||u_{1}||_\infty+1)^{\s}}\ge \dfrac{f_1}{(||u_{1}||_\infty+1)^{\s}}.
$$
Thus by the strong Maximum principle $u_{1}>0$ in $\O$. Hence for every $\widetilde{\O}\subset\subset \O$, there exists a positive constant $C(\tilde{\O})$ independent of $n$ such that
$$
u_{n}\geq u_{1}\ge C(\widetilde{\O})>0.
$$

\end{proof}

\begin{remarks} As a conclusion of the above computation, we obtain that $u_{n}$ is the unique positive solution to problem \eqref{PPP2}.
\end{remarks}
Let us begin by analyzing the case $\a<1$. We have the next existence result.
\begin{Theorem}\label{exist1}
Assume that $\a<1$ and $f\in L^{p}(\O)$ with $p'\a<1$, then problem \eqref{PPP3} has a distributional solution $u$ such that $u^{\frac{\s+1}{2}}\in H^s_0(\O)$.
\end{Theorem}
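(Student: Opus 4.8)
The plan is to pass to the limit in the approximating problems \eqref{PPP2}, using a test-function argument to obtain a uniform $H^s_0$-bound on the powers $u_n^{(\s+1)/2}$, and then upgrade the weak convergence to a distributional solution of \eqref{PPP3}. By Lemma \ref{inc} the sequence $\{u_n\}_n$ is increasing and bounded below on compact subsets of $\O$, so there is a measurable $u$ with $u_n\uparrow u$ and $u\ge C(\widetilde\O)>0$ on each $\widetilde\O\subset\subset\O$; in particular the singular term $f/(u^\s d^\a)$ makes sense on compacta.

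First I would derive the basic energy estimate. Choose $(u_n+\frac1n)^\s - (\frac1n)^\s$, which belongs to $H^s_0(\O)\cap L^\iy(\O)$ since $\s>0$ and $u_n\in H^s_0(\O)\cap L^\iy(\O)$, as a test function in \eqref{PPP2}. Using the standard pointwise algebraic inequality for the fractional Dirichlet form applied to a monotone nonlinearity $g(t)=(t+\frac1n)^\s$, namely $(g(a)-g(b))(a-b)\ge c_\s\,(G(a)-G(b))^?$ — more precisely the inequality
$$
\big((a+\tfrac1n)^\s-(b+\tfrac1n)^\s\big)(a-b)\ge \frac{4\s}{(\s+1)^2}\Big((a+\tfrac1n)^{\frac{\s+1}{2}}-(b+\tfrac1n)^{\frac{\s+1}{2}}\Big)^2,
$$
one bounds the left-hand side from below by $\frac{4\s}{(\s+1)^2}\,a_{N,s}/2\iint_{D_\O}\frac{|v_n(x)-v_n(y)|^2}{|x-y|^{N+2s}}$ where $v_n=(u_n+\frac1n)^{(\s+1)/2}-(\frac1n)^{(\s+1)/2}\in H^s_0(\O)$. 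On the right-hand side the singular factor $(u_n+\frac1n)^\s$ cancels, leaving
$$
\io \frac{f_n}{(d(x)+\frac1n)^\a}\,\big((u_n+\tfrac1n)^\s-(\tfrac1n)^\s\big)\,\frac{dx}{(u_n+\frac1n)^\s}\le \io \frac{f}{d^\a}\,dx.
$$
Now I would use $f\in L^p(\O)$ with $p'\a<1$ together with the fact that $d^{-\a p'}$ is integrable near $\p\O$ when $p'\a<1$ (since $\O$ is $\mathcal C^{1,1}$) to conclude via Hölder that $\io f\,d^{-\a}\le C$. This gives $\|v_n\|_{H^s_0(\O)}\le C$ uniformly in $n$, hence $v_n\weakly v$ in $H^s_0(\O)$ and strongly in $L^r(\O)$ for $r<2^*_s$; since $u_n\uparrow u$ a.e., one identifies the limit as $v=u^{(\s+1)/2}$ and obtains $u^{(\s+1)/2}\in H^s_0(\O)$.

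It remains to pass to the limit in the equation in the distributional sense of Definition \ref{distri}: for $\psi\in\mathcal C^\infty_0(\O)$ one has $\io u_n((-\D)^s\psi)\,dx = \io \frac{f_n\,\psi}{(u_n+\frac1n)^\s(d+\frac1n)^\a}\,dx$. The left side converges to $\io u((-\D)^s\psi)\,dx$ by monotone convergence ($u\in L^1(\O)$ follows from $u^{(\s+1)/2}\in L^1$ when $\s\le 1$, and from the $L^\theta$ embedding otherwise — alternatively one notes $u_n$ is bounded in $L^1$ by testing with the torsion function $\rho$). On the right side, $\text{supp}\,\psi$ is a fixed compact $K\subset\subset\O$, on which $u_n\ge C(K)>0$ by Lemma \ref{inc}, so the integrand is dominated by $C(K)\,f\,\|\psi\|_\iy\in L^1(K)$ and converges pointwise to $\frac{f\psi}{u^\s d^\a}$; dominated convergence finishes the identification.

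The main obstacle I anticipate is the energy estimate: one must choose the test function so that the singular denominator $(u_n+\frac1n)^\s$ cancels exactly against the numerator it generates, while simultaneously controlling the boundary weight $d^{-\a}$ — this is precisely where the hypothesis $p'\a<1$ enters, guaranteeing $\io f\,d^{-\a}<\iy$. A secondary technical point is the algebraic inequality relating $\big((a+\tfrac1n)^\s-(b+\tfrac1n)^\s\big)(a-b)$ to $\big((a+\tfrac1n)^{(\s+1)/2}-(b+\tfrac1n)^{(\s+1)/2}\big)^2$, which is elementary (reduce to a one-variable convexity statement) but must be applied carefully under the double integral defining the Dirichlet form on $D_\O$.
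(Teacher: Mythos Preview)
Your proposal is correct and follows essentially the same route as the paper: test the approximating equation with a power of $u_n$ so that the singular factor $(u_n+\frac1n)^{-\s}$ cancels, invoke the algebraic inequality \eqref{alge3} to obtain a uniform $H^s_0$-bound on $u_n^{(\s+1)/2}$, and control the remaining integral $\io f\,d^{-\a}$ by H\"older using $p'\a<1$. Your choice of the shifted test function $(u_n+\frac1n)^\s-(\frac1n)^\s$ (rather than $u_n^\s$ as in the paper) and your explicit justification of the distributional limit passage on compacta via Lemma~\ref{inc} are in fact cleaner than the paper's version, but the argument is the same.
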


\begin{proof}
Recall that $u_{n}$ is the unique solution to problem \eqref{PPP2}. Using $u^\s_n$ as a test function in \eqref{PPP2} it follows that
$$
\iint_{\re^{2N}}\frac{(u_n(x)-u_n(y))(u^{\s}_n(x)-u^{\s}_n(y))}{|x-y|^{N+2s}}dxdy\le \io \frac{f_n}{(d(x)+\frac 1n)^{\a}}dx.
$$
Notice that, for all $(a, b) \in (\re^+)^2$ and for all $\s>0$, we have
\begin{equation}\label{alge3}
(a-b)(a^{\s}-b^{\s})\ge c_3|a^{\frac{\s+1}{2}}-b^{\frac{\s+1}{2}}|^2,
\end{equation}
hence using H\"older inequality, it follows that
$$
\iint_{\re^{2N}}\frac{(u^{\frac{\s+1}{2}}_n(x)-u^{\frac{\s+1}{2}}_n(y))^2}{|x-y|^{N+2s}}dxdy\le ||f||_{L^p(\O)}\bigg(\io \frac{1}{d^{p'\a}(x)}dx\bigg)^{\frac{1}{p'}}.
$$
Since $p'\a<1$, then $||u_n^{\frac{\s+1}{2}}||_{H^s_0(\O)}\le C$. Hence we get the existence of a measurable function $u$ such that $u_n\uparrow u$ a.e in $\O$, $u^{\frac{\s+1}{2}}\in H^s_0(\O)$ and $u_n^{\frac{\s+1}{2}}\rightharpoonup u^{\frac{\s+1}{2}}$ weakly in $H^s_0(\O)$. It is not difficult to show that $u$ is a distributional solution to problem \eqref{PPP3}.
\end{proof}

In the case of general datum $f$, we have the next existence result.
\begin{Theorem}\label{Abdl}
Assume $\a, \s>0$, then we have:
\begin{enumerate}
\item If $ f\in L^{1}(\O, d^{s-\a})$ , then  for all $\s>0$, there exists a positive solution $u$ to problem \eqref{PPP3} in the sense of Definition \ref{avee} such that $\frac{u^{\s+1}}{d^\beta}\in L^{1}(\O)$ for all $0<\b<s$.
\item If  $ f\in L^{1}(\O, d^{s-\a} \log(\frac{D}{d}))$, where $D>\text{dist}(x,\p\O)$, then for all $ \s>0$, there exists a positive solution $u$  to problem \eqref{PPP3} in the sense of Definition \ref{avee} such that $\frac{u^{\s+1}}{d^s}\in L^{1}(\O)$.
\item If  $ f\in L^{1}(\O, d^{2s-\b-\a} )$ for some $\beta\in (s,2s)$, then for all $ \s>0$, there exists a positive solution $u$ to problem \eqref{PPP3}  in the sense of the Definition \ref{avee} such that $\frac{u^{\s+1}}{d^\beta}\in L^{1}(\O)$.
\end{enumerate}
\end{Theorem}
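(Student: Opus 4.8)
The plan is to run the approximation scheme already set up: by Lemma \ref{inc} the solutions $u_n$ of \eqref{PPP2} increase to some $u$, with $u\ge u_1>0$ in $\O$, so it suffices to prove a weighted $L^1$ bound on $u_n^{\s+1}$ that is \emph{uniform in $n$}. The crucial remark is that for $\s>0$ the map $t\mapsto t^{\s+1}$ is convex on $[0,\iy)$, so the convexity (Kato--type) inequality for the fractional Laplacian yields, in the weak sense,
$$
(-\D)^{s}\big(u_n^{\s+1}\big)\ \le\ (\s+1)\,u_n^{\s}\,(-\D)^{s}u_n\qquad\text{in }\O .
$$
Plugging in the equation \eqref{PPP2} and using $u_n^{\s}\le (u_n+\tfrac1n)^{\s}$, the right--hand side is dominated by $g_n:=(\s+1)\,f_n\,(d(x)+\tfrac1n)^{-\a}\in L^{\iy}(\O)$. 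Since $u_n\in H^{s}_0(\O)\cap L^{\iy}(\O)$ forces $u_n^{\s+1}\in H^{s}_0(\O)\cap L^{\iy}(\O)$, comparing $u_n^{\s+1}$ with the energy solution $W_n\in H^s_0(\O)$ of $(-\D)^{s}W_n=g_n$ in $\O$, $W_n=0$ outside, gives $u_n^{\s+1}\le W_n$ in $\ren$. (For fixed $n$ the datum $g_n$ is bounded, so $W_n$ is bounded and vanishes at least like a positive power of $d$ near $\p\O$, which will be enough to make all integrals below finite at the level of fixed $n$.)

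The second step is to test the (linear) equation solved by $W_n$ against the auxiliary function of Theorem \ref{avee}. For the exponent $\b$ appropriate to each case let $\phi_\b\ge 0$ be the solution of \eqref{ave1} with datum $d^{-\b}$, so that $\phi_\b\backsimeq d^{s}$ if $\b<s$, $\phi_\b\backsimeq d^{s}\log(D/d)$ if $\b=s$, and $\phi_\b\backsimeq d^{2s-\b}$ if $\b\in(s,s+1)$. Then
$$
\io \dfrac{u_n^{\s+1}}{d^{\b}}\,dx\ \le\ \io \dfrac{W_n}{d^{\b}}\,dx\ =\ \io W_n\,(-\D)^{s}\phi_\b\,dx\ =\ \io \phi_\b\,g_n\,dx\ =\ (\s+1)\io \dfrac{\phi_\b\, f_n}{(d(x)+\tfrac1n)^{\a}}\,dx,
$$
and the three alternatives of the statement are exactly the three boundary profiles of $\phi_\b$: in case (1) take $0<\b<s$, so $\phi_\b\backsimeq d^{s}$ and the last integral is $\le C\io f\,d^{s-\a}\,dx$; in case (2) take $\b=s$, so $\phi_s\backsimeq d^{s}\log(D/d)$ and it is $\le C\io f\,d^{s-\a}\log(D/d)\,dx$; in case (3) take the given $\b\in(s,2s)$, so $\phi_\b\backsimeq d^{2s-\b}$ and it is $\le C\io f\,d^{2s-\b-\a}\,dx$. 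By the corresponding hypothesis on $f$ the right--hand side is finite, hence $\io u_n^{\s+1}\,d^{-\b}\,dx\le C$ with $C$ independent of $n$.

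Finally, since $u_n\uparrow u$ a.e. in $\O$, Beppo Levi gives $d^{-\b}u^{\s+1}\in L^{1}(\O)$, and in particular $u\in L^{\s+1}(\O)\subset L^{1}(\O)$ because $d\le \mathrm{diam}(\O)$. On every $\widetilde\O\subset\subset\O$ one has $u_n\ge C(\widetilde\O)>0$ by Lemma \ref{inc}, so $f_n\,(u_n+\tfrac1n)^{-\s}(d+\tfrac1n)^{-\a}\to f\,u^{-\s}d^{-\a}$ a.e. and is dominated in $L^{1}(\widetilde\O)$ by $C(\widetilde\O)^{-\s}\,f\,d^{-\a}$; passing to the limit in the distributional formulation of \eqref{PPP2} shows that $u$ solves \eqref{PPP3} in the sense stated, obtained as the increasing limit of $\{u_n\}_n$. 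The main obstacle is the rigorous justification, in this low--regularity setting, of the Kato--type inequality and of the integration--by--parts identity $\io W_n(-\D)^{s}\phi_\b\,dx=\io \phi_\b(-\D)^{s}W_n\,dx$ when $(-\D)^{s}\phi_\b=d^{-\b}$ is merely locally (indeed only weighted-) integrable and $\phi_\b$ need not lie in the test space $\mathbb{X}_s$; this is handled by first replacing the datum $d^{-\b}$ by the bounded truncations $\min(d^{-\b},k)$, pairing with the corresponding regular auxiliary solutions, and letting $k\to\iy$ by monotone convergence. Once these two points are settled, the uniform estimate and the passage to the limit are routine.
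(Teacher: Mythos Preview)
Your proof is correct and follows the same strategy as the paper: Kato inequality for $u_n^{\s+1}$, testing against the auxiliary function $\phi_\b$ of Theorem \ref{avee}, reading off the uniform weighted $L^1$ bound from the boundary profile of $\phi_\b$, and monotone convergence. The only cosmetic difference is that the paper tests $\phi_\b$ directly against the Kato inequality (and implicitly integrates by parts on the left), while you first compare $u_n^{\s+1}$ with the solution $W_n$ of the linear problem with datum $g_n$ and then pair $W_n$ with $\phi_\b$ --- an equivalent but slightly more explicit route, and your remarks on justifying the Kato step and the duality pairing address points the paper simply passes over.
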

\begin{proof} We proceed by iteration. Consider $u_n$ to be the unique positive solution to the problem \eqref{PPP2}, then using the Kato inequality it holds that
$$
(-\Delta)^s u^{\s+1}_n\le (\s+1)u^\s_n(-\Delta)^s u_n.
$$
Define $\phi$ as the unique positive solution to \eqref{ave1} with $\beta\in (0,s+1)$. Using $\phi$ as a test function in the previous inequality, it holds,
\begin{eqnarray*}
\io \phi(-\Delta)^s u^{\s+1}_ndx &\leq& (\s+1)\io  \phi u^\s_n(-\Delta)^su_n dx \\
& = &\io \dfrac{u^\s_n \phi f_n}{(u_{n}+\frac 1n)^{\s} (d(x)+\frac 1n)^{\a}} dx\\
& \leq & \io \dfrac{\phi f}{d^\a(x)} dx.
\end{eqnarray*}
Suppose that $f$ satisfies the first condition in Theorem \ref{Abdl}, then choosing $\beta\in (0,s)$ in Theorem \ref{avee}, we know that $\phi\backsimeq d^s$. Hence taking into consideration that $f\in L^{1}(\O, d^{s-\a})$, it follows that
$$
\io \dfrac{u^{\s+1}_n}{d^\b}dx\leq C\io f(x) d^{s-\a}(x)\,dx<\iy.
$$
Since $\{u_n\}_n$ is a monotone sequence in $n$, we get the existence of a measurable function  $u$ such that,
$$
\dfrac{u^{\s+1}_n}{d^{\b}}\rightarrow \dfrac{u^{\s+1}}{d^{\b}},  \quad n\rightarrow \iy.
$$
It is clear that $u=0 \text{ in }\ren \setminus\Omega$, hence $u$ is a positive solution to problem \eqref{PPP3} in the sense of Definition \ref{distri}.

\

For the second (resp. the third) case, it suffices to take $\beta=s$ (resp. $\beta\in (s,2s)$) and to use the fact that $v \backsimeq d^s (x)\log(\frac{D}{d})$(resp. $v \backsimeq d^{2s-\b} (x)$). Hence following closely the same calculation as in first case and passing to the limit, we reach the existence result.
\end{proof}

\

{\bf Acknowledgments.} The authors would like to express their gratitude to the anonymous reviewers for their comments and suggestions that improve the last
version of the manuscript.

\end{document}